\newcommand\cA{{\mathcal A}}
\newcommand\cC{{\mathcal C}}
\newcommand\cF{{\mathcal F}}
\newcommand{\abs}[1]{\left\lvert{#1}\right\rvert}
\newcommand{\floor}[1]{\left\lfloor{#1}\right\rfloor}
\newcommand\cG{{\mathcal G}}
\newcommand\cM{{\mathcal M}}
\newcommand\cN{{\mathcal N}}
\newcommand\cP{{\mathcal P}}
\newcommand\cQ{{\mathcal Q}}
\theoremstyle{plain}
\newtheorem{theorem}{Theorem}[section]
\newtheorem{lemma}[theorem]{Lemma}
\newtheorem{corollary}[theorem]{Corollary}
\newtheorem{conjecture}[theorem]{Conjecture}
\newtheorem{proposition}[theorem]{Proposition}
\newtheorem{observation}[theorem]{Observation}
\newtheorem{claim}[theorem]{Claim}
\theoremstyle{definition}
\newtheorem{defn}[theorem]{Definition}
\newtheorem{notation}[theorem]{Notation}
\newtheorem*{remark}{Remark}
\newcommand\cref[1]{Corollary~\ref{cor:#1}}
\title{On the number of containments in $P$-free families}
\author{D\'aniel Gerbner$^{a,}$\thanks{Research supported by the J\'anos Bolyai Research Fellowship of the Hungarian Academy of Sciences and the National Research, Development and Innovation Office -- NKFIH under the grant K 116769.}, Abhishek Methuku$^{b,}$\thanks{Research supported by the National Research, Development and Innovation Office - NKFIH under the grant K 116769.}, D\'aniel T. Nagy$^{a,}$\thanks{Research supported by the \'{U}NKP-17-3 New National Excellence Program of the Ministry of Human Capacities and by National Research, Development and Innovation Office - NKFIH under the grant K 116769.}, \\ Bal\'azs Patk\'os$^{a,}$\thanks{Research supported by the National Research, Development and Innovation Office -- NKFIH under the grants SNN 116095 and K 116769.}, M\'at\'e Vizer$^{a,}$\thanks{Research supported by the National Research, Development and Innovation Office -- NKFIH under the grant SNN 116095.} \\
\small $^a$ Alfr\'ed R\'enyi Institute of Mathematics, Hungarian Academy of Sciences\\
\small P.O.B. 127, Budapest H-1364, Hungary.\\
\small $^b$ Central European University, Department of Mathematics\\
\small Budapest, H-1051, N\'ador utca 9.\\
\medskip
\small \texttt{\{gerbner,nagydani,patkos\}@renyi.hu, \{abhishekmethuku,vizermate\}@gmail.com}
\medskip}
\begin{document}
\maketitle
\begin{abstract}
A subfamily $\{F_1,F_2,\dots,F_{|P|}\}\subseteq \cF$ is a copy of the poset $P$ if there exists a bijection $i:P\rightarrow \{F_1,F_2,\dots,F_{|P|}\}$ such that $p\le_P q$ implies $i(p)\subseteq i(q)$. A family $\cF$ is $P$-free, if it does not contain a copy of $P$. In this paper we establish basic results on the maximum number of $k$-chains in a $P$-free family $\cF\subseteq 2^{[n]}$. We prove that if the height of $P$, $h(P) > k$, then this number is of the order $\Theta(\prod_{i=1}^{k+1}\binom{l_{i-1}}{l_i})$, where $l_0=n$ and $l_1\ge l_2\ge \dots \ge l_{k+1}$ are such that $n-l_1,l_1-l_2,\dots, l_k-l_{k+1},l_{k+1}$ differ by at most one. On the other hand if $h(P)\le k$, then we show that this number is of smaller order of magnitude. 


Let $\vee_r$ denote the poset on $r+1$ elements $a, b_1, b_2, \ldots, b_r$, where $a < b_i$ for all $1 \le i \le r$ and let $\wedge_r$ denote its dual. 
For any values of $k$ and $l$, we construct a $\{\wedge_k,\vee_l\}$-free family and we conjecture that it contains asymptotically the maximum number of pairs in containment. We prove that this conjecture holds under the additional assumption that a chain of length 4 is forbidden. Moreover, we prove the conjecture for some small values of $k$ and $l$. We also derive the asymptotics of the maximum number of copies of certain tree posets $T$ of height 2 in $\{\wedge_k,\vee_l\}$-free families $\cF\subseteq 2^{[n]}$. 

\end{abstract}

\section{Introduction}

In extremal set theory, many of the problems considered can be phrased in the following way: what is the size of the largest family of sets that satisfy a certain property. The very first such result is due to Sperner \cite{S1928} which states that if $\cF$ is a family of subsets of $[n]=\{1,2\dots,n\}$ (we write $\cF\subseteq 2^{[n]}$ to denote this fact) such that no pair $F,F'\in \cF$ of sets are in inclusion $F\subsetneq F'$, then $\cF$ can contain at most $\binom{n}{\lfloor n/2\rfloor}$ sets. This is sharp as shown by $\binom{[n]}{\lfloor n/2\rfloor}$ (the family of all $k$-element subsets of a set $X$ is denoted by $\binom{X}{k}$ and is called the $k^{th}$ \textit{layer} of $X$). This was later generalized by Erd\H os \cite{E1945}, who showed that if $\cF \subseteq 2^{[n]}$ does not contain a \textit{chain} of length $k+1$ (i.e. nested sets $F_1\subsetneq F_2 \subsetneq \dots \subsetneq F_{k+1}$), then the size of $\cF$ is at most $\sum_{i=1}^k\binom{n}{\lfloor \frac{n-k}{2}\rfloor +i}$, the sum of the $k$ largest binomial coefficients of order $n$.

If $P$ is a poset, we denote by $\le_P$ the partial order on the elements of $P$. Generalizing Sperner's result, Katona and Tarj\'an \cite{KT} introduced the problem of determining the maximum size of a family $\cF\subseteq 2^{[n]}$ that does not contain sets satisfying some inclusion patterns. 

\begin{defn}

 Let $P$ be a finite poset and $\cF\subseteq 2^{[n]}$. A subfamily $\cG\subseteq \cF$ is a \textit{(weak) copy} of $P$ if there exists a bijection $\phi: P \rightarrow \cG$ such that we have $\phi(x)\subseteq \phi(y)$ whenever $x \le_P y$ holds.

Let $\cF\subseteq 2^{[n]}$ and let $\cP$ be a set of posets. We say that $\cF$ is \textit{$\cP$-free}, if $\cF$ does not contain a copy of $P$ for any $P \in \cP$. Generally, the area of forbidden subposet problems is concerned with determining the quantity

$$La(n,\cP):=\max\{|\cF|:\cF\subseteq 2^{[n]}, ~\text{$\cF$ is $\cP$-free}\}.$$

\end{defn}

If $\cP=\{P\}$ we simply denote the quantity above by $La(n,P).$ We will write $P_k$ for the totally ordered set (path poset) of size $k$ and using this, Erd\H os's above-mentioned result can be formulated as $La(n,P_{k+1})=\sum_{i=1}^k\binom{n}{\lfloor \frac{n-k}{2}\rfloor +i}$.

The value of $La(n,P)$ has been determined precisely or asymptotically for many posets $P$, but still unkown in general. Let us mention some of the results that will be important for us. Let $\wedge_r$ denote the poset on $r+1$ elements $a, b_1, b_2, \ldots, b_r$ where $a > b_i$ for all $1 \le i \le r$. Let $\vee_r$ denote the poset on $r+1$ elements $a, b_1, b_2, \ldots, b_r$ where $a < b_i$ for all $1 \le i \le r$.
 Katona and Tarj\'an \cite{KT} proved that $La(n,\{ \wedge_2, \vee_2 \}) =2\binom{n-1}{\lfloor \frac{n-1}{2}\rfloor}$. They also showed the following.

\begin{theorem}[Katona, Tarj\'an \cite{KT}]
\label{Katona_Tarjan}
For any positive integer $r \ge 2$, we have $$La(n,\vee_r) = La(n,\wedge_r) = \binom{n}{\lfloor n/2\rfloor} \left(1+O\left(\frac{1}{n}\right)\right).$$
\end{theorem}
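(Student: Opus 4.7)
The plan is to prove the lower and upper bounds separately, using the middle-layer antichain for the former and a refined Lubell-type argument for the latter.

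For the lower bound, take $\cF = \binom{[n]}{\lfloor n/2 \rfloor}$: this antichain of size $\binom{n}{\lfloor n/2 \rfloor}$ contains no copy of $\vee_r$ or $\wedge_r$, as both posets require strict containments. The map $\cF \mapsto \{[n]\setminus F : F \in \cF\}$ reverses inclusions and preserves sizes, yielding a bijection between $\vee_r$-free and $\wedge_r$-free families. Hence $La(n, \vee_r) = La(n, \wedge_r) \ge \binom{n}{\lfloor n/2 \rfloor}$, and it suffices to prove the matching upper bound for $\vee_r$.

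Let $\cF \subseteq 2^{[n]}$ be $\vee_r$-free. Equivalently, every $F \in \cF$ has at most $r-1$ proper supersets in $\cF$, so every maximal chain in $2^{[n]}$ meets $\cF$ in at most $r$ sets. Lubell's chain-counting argument then gives $\sum_{F \in \cF} \binom{n}{|F|}^{-1} \le r$, hence the crude bound $|\cF| \le r \binom{n}{\lfloor n/2 \rfloor}$. To refine this to $(1 + O(1/n))\binom{n}{\lfloor n/2 \rfloor}$, I would induct on $r$, the base case $r = 1$ being Sperner's theorem. For the inductive step, if $[n] \in \cF$ then $\cF \setminus \{[n]\}$ is $\vee_{r-1}$-free (each remaining element loses a superset) and the hypothesis applies; adding back $[n]$ preserves the bound. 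If $[n] \notin \cF$, apply the pointwise inequality $k \le 1 + \binom{k}{2}$ to $X := |C \cap \cF|$ for a uniformly random maximal chain $C$, obtaining
\[
\sum_{F \in \cF} \binom{n}{|F|}^{-1} = \mathbb{E}[X] \le 1 + \mathbb{E}\!\left[\binom{X}{2}\right] = 1 + \sum_{\substack{F, G \in \cF \\ F \subsetneq G}} \frac{1}{\binom{n}{|F|}\binom{n-|F|}{|G|-|F|}}.
\]
There are at most $(r-1)|\cF|$ pairs (by $\vee_r$-freeness), and since $[n] \notin \cF$ every term is at most $1/[\binom{n}{|F|}(n-|F|)]$. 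Provided the main contribution comes from layers $k$ near $\lfloor n/2 \rfloor$, where $n - k = \Theta(n)$, this yields $\mathbb{E}[\binom{X}{2}] = O(1/n)$ and closes the induction.

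The main obstacle is verifying this concentration claim — controlling the contribution to $\sum_k f_k/[\binom{n}{k}(n-k)]$ from tail layers $k$ far from $\lfloor n/2 \rfloor$, where $f_k = |\cF \cap \binom{[n]}{k}|$. The crude estimate $f_k \le \binom{n}{k}$ combined with $\sum_k 1/(n-k) = \Theta(\log n)$ yields only an $O(\log n)$ bound on the pair sum, which is insufficient. To reach $O(1/n)$ one must combine the LYM budget $\sum_k f_k/\binom{n}{k} \le r$ with an iterative application of the inductive hypothesis to the $\vee_{r-1}$-free subfamilies obtained by peeling off top-most elements, and exploit the rapid decay of $\binom{n}{k}/\binom{n}{\lfloor n/2\rfloor}$ away from the middle to show that only an $O(1/n)$ fraction of the LYM weight can lie in the tails.
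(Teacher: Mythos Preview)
This theorem is not proved in the paper at all: it is quoted from Katona and Tarj\'an~\cite{KT} and used as a black box throughout (in the proofs of Theorems~\ref{szaramajtol}, \ref{easy}, \ref{p4free}). So there is no ``paper's own proof'' to compare your attempt against.

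Assessing your proposal on its own merits: the lower bound and the complementation duality are fine, and the observation that a $\vee_r$-free family meets every maximal chain in at most $r$ sets is correct. The genuine gap is in your Case~2 strategy. You aim to show $\mathbb{E}[X]\le 1+O(1/n)$ by bounding $\mathbb{E}\bigl[\binom{X}{2}\bigr]=O(1/n)$, and you correctly flag this as ``the main obstacle''. The obstacle is not merely technical: the target inequality is \emph{false}. Take $r=3$ and $\cF=\binom{[n]}{n-1}\cup\binom{[n]}{n-2}$; this family is $\vee_3$-free and avoids $[n]$, yet
\[
\mathbb{E}\!\left[\binom{X}{2}\right]=\sum_{\substack{F\subsetneq G\\ |F|=n-2,\ |G|=n-1}}\frac{1}{\binom{n}{n-2}\binom{2}{1}}=n(n-1)\cdot\frac{1}{n(n-1)}=1,
\]
and indeed $\mathbb{E}[X]=2$. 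So the Lubell weight can genuinely be of order $r$, not $1+O(1/n)$; any argument that tries to bound $|\cF|$ via a bound on $\sum_F \binom{n}{|F|}^{-1}$ alone cannot succeed. Your suggested repair of ``peeling off top-most elements'' to pass to a $\vee_{r-1}$-free subfamily only yields $|\cF|\le |\cA|+|\cF\setminus\cA|\le \binom{n}{\lfloor n/2\rfloor}+(1+O(1/n))\binom{n}{\lfloor n/2\rfloor}$ by induction, losing a factor of~$2$.

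What is actually needed is an argument that directly compares the number of non-maximal sets to the number of maximal ones (or, dually for $\wedge_r$, non-minimal to minimal), exploiting that each non-maximal $F$ singles out at most $r-1$ supersets but sits below $n-|F|$ immediate successors in $2^{[n]}$. The original Katona--Tarj\'an proof proceeds along these combinatorial lines rather than via a second-moment Lubell inequality.
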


The Hasse diagram (also known as the cover graph) of a poset $P$ is a graph with vertex set $P$ where $p,q\in P$ are joined by an edge if $p\le_P q$ and there does not exist $r\neq p,q$ with $p\le_P r\le_P q$. A poset is called a \textit{tree poset} if its Hasse diagram is a tree. Bukh \cite{Bukh} generalized Theorem \ref{Katona_Tarjan} to all tree posets by showing that for any tree poset $T$ of height $h(T)$, we have $La(n, T) = (h(T)-1 + o(1)) \binom{n}{\lfloor \frac{n}{2} \rfloor}.$



Recently Gerbner, Keszegh and Patk\'os \cite{GKP} initiated the investigation of counting the maximum number of copies of a poset in a family $\cF\subseteq 2^{[n]}$ that is $\mathcal P$-free. More formally they introduced the following quantity: Let $\cF\subseteq 2^{[n]}$ and $P$ be a poset, then let $c(P,\cF)$ denote the number of copies of $P$ in $\cF$.

\begin{defn} For families of posets $\cP$ and $\cQ$ let

$$La(n,\cP,\cQ):=\max\left\{\sum_{Q\in \cQ}c(Q,\cF): \cF\subseteq 2^{[n]}, ~\text{$\cF$ is $\cP$-free}\right\}.$$

\end{defn}

If either $\cQ=\{Q\}$ or $\cP=\{P\}$, then we simply write $La(n,P,\cQ)$, $La(n,\cP,Q)$, $La(n,P,Q)$. Note that $La(n,\cP)=La(n,\cP,P_1)$.

There are not many results in the literature where
other posets are counted. Katona \cite{K1973} determined the maximum number of $2$-chains (copies
of $P_2$) in a $2$-Sperner ($P_3$-free) family $\cF\subseteq 2^{[n]}$ by showing $La(n,P_3,P_2)=\binom{n}{i_1}\binom{i_1}{i_2}$ where $i_1,i_2$ are chosen such that $n-i_1,i_1-i_2$ and $i_2$ differ by at most one, so $i_1$ is roughly $2n/3$ and $i_2$ is roughly $n/3$. This was reproved in \cite{P2009} and generalized by Gerbner and Patk\'os in \cite{GP2008}, where they proved the following result. To state the theorem and for later purposes we will use the multinomial coefficient: $\binom{n}{l_1,l_2,...,l_k}=\prod_{i=1}^{k+1}\binom{l_{i-1}}{l_i}$, where $i_0=n$ counts the number of $k$-chains $F_1\subsetneq F_2 \subsetneq \dots \subsetneq F_k$ in $2^{[n]}$ with $|F_i|=l_i$.

\begin{theorem}\label{gp}
For any pair $k>l\ge 1 $ of integers we have 
$$La(n,P_k,P_l)=\max_{0\le i_1<i_2<\dots<i_{k-1}\le n}c\left(P_l, \bigcup_{j=1}^{k-1}\binom{[n]}{i_j}\right)=\max_{0\le i_1<i_2<\dots<i_{k-1}\le n}\binom{n}{l_{k-1},\dots,l_1}.$$
Moreover, if $k=l+1$, then the above maximum is attained when the integers $i_1,i_2-i_1,\dots, i_{k-1}-i_{k-2},n-i_{k-1}$ differ by at most one.
\end{theorem}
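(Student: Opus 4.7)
The lower bound is immediate: take $\cF^* := \bigcup_{j=1}^{k-1}\binom{[n]}{i_j}$ for the optimal layer positions $i_1<\cdots<i_{k-1}$. As a union of $k-1$ antichains, $\cF^*$ is automatically $P_k$-free, and it contains exactly $c\left(P_l,\bigcup_{j=1}^{k-1}\binom{[n]}{i_j}\right)$ copies of $P_l$.

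For the upper bound I would apply the permutation/chain method. For each permutation $\sigma$ of $[n]$ the corresponding maximal chain in $2^{[n]}$ meets $\cF$ in at most $k-1$ sets (since $\cF$ is $P_k$-free), hence contributes at most $\binom{k-1}{l}$ to the count of $l$-subchains of $\cF$ lying on it. Because any $l$-chain $F_1\subsetneq\cdots\subsetneq F_l$ in $\cF$ lies on exactly $|F_1|!(|F_2|-|F_1|)!\cdots(n-|F_l|)!$ maximal chains, double counting produces the LYM-style bound
\[
\sum_{F_1\subsetneq\cdots\subsetneq F_l\,\in\,\cF}\frac{1}{\binom{n}{|F_1|,\,|F_2|-|F_1|,\,\ldots,\,n-|F_l|}}\ \le\ \binom{k-1}{l}.
\]
When $k=l+1$ the right-hand side equals $1$, and bounding each summand from below by $1/M$, where $M:=\max_{s_1<\cdots<s_l}\binom{n}{s_1,s_2-s_1,\ldots,n-s_l}$, gives $c(P_l,\cF)\le M$. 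The \emph{moreover} clause is then the standard fact that a multinomial coefficient $\binom{n}{t_1,\ldots,t_{l+1}}$ with $\sum t_i=n$ is maximised precisely when the parts differ pairwise by at most one, verified by the exchange: if $a\ge b+2$, replacing $(a,b)$ with $(a-1,b+1)$ strictly increases the coefficient.

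The main obstacle lies in the range $k>l+1$, where the raw LYM estimate $\binom{k-1}{l}M$ is too weak, because in $\cF^*$ the $\binom{k-1}{l}$ multinomial summands cannot simultaneously equal $M$. I would tackle this via Mirsky's theorem, decomposing $\cF=\cA_1\cup\cdots\cup\cA_{k-1}$ with $\cA_j:=\{F\in\cF : \text{longest chain of }\cF\text{ ending at }F\text{ has length }j\}$; any $l$-chain in $\cF$ then uses $l$ distinct levels with increasing indices. For each $l$-subset $\{j_1<\cdots<j_l\}$ of levels the subfamily $\cA_{j_1}\cup\cdots\cup\cA_{j_l}$ is $P_{l+1}$-free, so the already-established $k=l+1$ case bounds the $l$-chains supported on those levels by an appropriate maximum multinomial. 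Summing these $\binom{k-1}{l}$ bounds and performing a convexity/interchange argument to show that the aggregate optimum is realised by a union of full layers — which then matches $\max_{i_1<\cdots<i_{k-1}}c(P_l,\bigcup_j\binom{[n]}{i_j})$ — is the most technically delicate step of the proof.
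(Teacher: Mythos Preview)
This theorem is not proved in the present paper; it is quoted from Gerbner--Patk\'os \cite{GP2008} and used as a black box. So there is no ``paper's own proof'' to compare against, and I can only assess your proposal on its merits.

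Your argument for the case $k=l+1$ is correct and is essentially the standard one: the LYM-type inequality you derive has right-hand side $\binom{k-1}{l}=1$, and bounding each term below by $1/M$ with $M$ the maximal multinomial yields $c(P_l,\cF)\le M$, matched by $l$ full layers at the optimal positions. The ``moreover'' clause follows from the exchange argument you describe.

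For $k>l+1$, however, there is a genuine gap. After the Mirsky decomposition $\cF=\cA_1\cup\cdots\cup\cA_{k-1}$, applying the $k=l+1$ case to each $l$-tuple of levels gives only $c(P_l,\cA_{j_1}\cup\cdots\cup\cA_{j_l})\le M$, the \emph{same} constant $M=La(n,P_{l+1},P_l)$ for every choice of $\{j_1,\dots,j_l\}$. Summing yields $c(P_l,\cF)\le\binom{k-1}{l}M$, which is exactly the raw LYM bound you already identified as too weak: a union of $k-1$ full layers cannot realise the maximal multinomial on every $l$-subset of layers simultaneously, so the target is strictly smaller. Your hoped-for ``convexity/interchange argument'' would have to supply substantially more than this decomposition retains, since the per-subset bound carries no information about the antichains $\cA_j$ themselves. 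The proof in \cite{GP2008} proceeds instead via the $l$-chain \emph{profile polytope}: one shows that the extreme points of the convex hull of all achievable profile vectors $(f_{a_1,\dots,a_l})_{a_1<\cdots<a_l}$ of $P_k$-free families are precisely the profiles of unions of at most $k-1$ full layers, whence any linear functional such as $\sum f_{a_1,\dots,a_l}$ is maximised at such a union. That argument tracks the full profile, not just one scalar per $l$-subset of Mirsky levels, and this extra structure is what your outline is missing.
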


As the simplest poset apart from $P_1$ is $P_2$, in this paper we focus on the number of pairs in containment in a $P$-free family, i.e. we try to determine or estimate $La(n,P,P_2)$. We prove that the order of magnitude (maybe apart from a polynomial factor) depends on the \textit{height} of $P$ (the length of the longest chain in $P$).

\begin{theorem}\label{szaramajtol}
\textbf{(i)} For any poset $P$ of height at least 3, we have 
$$La(n,P,P_2) = \Theta(La(n,P_3,P_2)).$$

Moreover, 

$$La(n,P_3,P_2)\le La(n,P,P_2)\le La(n,P_{|P|},P_2) \le \left(\left\lfloor\frac{(|P|-1)^2}{4}\right\rfloor+o(1)\right)\cdot La(n,P_3,P_2).$$

\textbf{(ii)} For any connected poset $P$ of height 2 with at least 3 elements, we have
$$\Omega\left(\binom{n}{\lfloor n/2\rfloor}\right)=La(n,P,P_2)=O\left(n2^n\right).$$
\end{theorem}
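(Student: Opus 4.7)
The plan is to establish the five inequalities in turn.

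For the lower bound in part (i), $La(n,P_3,P_2) \le La(n,P,P_2)$ is immediate: since $h(P)\ge 3$, the chain $P_3$ embeds (weakly) into $P$, so any $P_3$-free family is $P$-free, and the extremal $P_3$-free family from Theorem~\ref{gp} gives the bound. The middle inequality $La(n,P,P_2) \le La(n,P_{|P|},P_2)$ follows from the key structural observation that \emph{every $P$-free family is $P_{|P|}$-free}: if a $P$-free $\cF$ contained a chain $F_1\subsetneq\cdots\subsetneq F_{|P|}$, then fixing any linear extension $\sigma$ of $P$ and setting $\phi(p):=F_{\sigma(p)}$ would produce a weak copy of $P$ in $\cF$, since $p\le_P q$ implies $\sigma(p)\le\sigma(q)$ and thus $\phi(p)\subseteq\phi(q)$, a contradiction.

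For the third inequality of part (i), apply Theorem~\ref{gp} with $k=|P|$ and $l=2$: $La(n,P_{|P|},P_2) = \max_{0\le i_1<\cdots<i_{k-1}\le n}\sum_{a<b}\binom{n}{i_b}\binom{i_b}{i_a}$, and each summand equals the trinomial $\binom{n}{i_a,\,i_b-i_a,\,n-i_b}$. Each trinomial is asymptotically at most $(1+o(1))La(n,P_3,P_2)$, with equality only when $(i_a,i_b)\approx(n/3,2n/3)$. The optimizing sequence splits the $k-1$ layers into two clumps near $n/3$ and $2n/3$ of sizes $\alpha$ and $k-1-\alpha$; intra-clump terms have strictly smaller exponential order, while each of the $\alpha(k-1-\alpha)$ inter-clump terms contributes $(1+o(1))La(n,P_3,P_2)$. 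The integer maximum of $\alpha(k-1-\alpha)$ is $\lfloor(k-1)^2/4\rfloor$, as required.

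For part (ii), the lower bound is realized by the matching family $\cF_0 = \{A, A\cup\{n\} : A\in\binom{[n-1]}{\lfloor n/2\rfloor}\}$, which has $\binom{n-1}{\lfloor n/2\rfloor}=\Omega(\binom{n}{\lfloor n/2\rfloor})$ pairs. Each $A\in\cF_0$ has exactly one superset and each $A\cup\{n\}$ has exactly one subset in $\cF_0$, so $\cF_0$ contains no $\vee_2$ or $\wedge_2$. Since $P$ is connected, of height $2$, and $|P|\ge 3$, its (connected, bipartite) Hasse diagram has a vertex of degree $\ge 2$, so $P\supseteq\vee_2$ or $P\supseteq\wedge_2$; hence $\cF_0$ is $P$-free. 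For the upper bound $O(n2^n)$: by the middle inequality of (i), $\cF$ is $P_{|P|}$-free, so chains in $\cF$ have length at most $|P|-1$, and Theorem~\ref{gp} with $l=1$ (Erd\H os' theorem) gives $|\cF| \le (|P|-1+o(1))\binom{n}{\lfloor n/2\rfloor}$. Combining this size bound with the trivial per-element superset bound of $2^{n-|A|}$ and exploiting the concentration of $\cF$ near the middle layer then yields $La(n,P,P_2) = O(n2^n)$.

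The main obstacle is the asymptotic analysis in the third inequality of part (i): since the indices $i_1<\cdots<i_{k-1}$ must be distinct integers, they cannot all coincide exactly at $n/3$ or $2n/3$, so one must quantify the resulting $(1+o(1))$ losses carefully and verify that the integer maximum of $\alpha(k-1-\alpha)$ is precisely $\lfloor(k-1)^2/4\rfloor$.
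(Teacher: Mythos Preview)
Your treatment of part (i) is correct and matches the paper's approach. For the third inequality the paper phrases the counting via a triangle-free graph on the level indices (joining $i_s<i_t$ when $\binom{n}{i_t}\binom{i_t}{i_s}=\Theta(La(n,P_3,P_2))$ and applying Mantel's theorem), but this is equivalent to your two-clump argument: the significant pairs are precisely those with one index near $n/3$ and the other near $2n/3$, and Mantel's bound $\lfloor(k-1)^2/4\rfloor$ coincides with $\max_\alpha \alpha(k-1-\alpha)$. Your lower bound in (ii) via the matching family is also the paper's construction.

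The upper bound in (ii), however, has a genuine gap. Your sketch uses only that a $P$-free family is $P_{|P|}$-free, hence has size $O\bigl(\binom{n}{\lfloor n/2\rfloor}\bigr)$, and then invokes the trivial bound $2^{n-|A|}$ on the number of supersets of a set $A$. But $P_{|P|}$-freeness alone cannot give $c(P_2,\cF)=O(n2^n)$: already for $|P|=3$, the extremal $P_3$-free family from Theorem~\ref{gp} has $La(n,P_3,P_2)=\binom{n}{n/3,n/3,n/3}=\Theta(3^n/n)$ containments, exponentially larger than $n2^n$. Concentration near the middle layer does not help either, since then $2^{n-|A|}\approx 2^{n/2}$ and multiplying by $|\cF|\approx 2^n/\sqrt{n}$ gives order $2^{3n/2}$. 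Nothing in your outline uses the height-$2$ hypothesis on $P$, which is essential.

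The paper's argument does exploit that hypothesis. It first embeds $P$ into the complete bipartite poset $K_{s,s}$ with $s=|P|-1$, so it suffices to bound $La(n,K_{s,s},P_2)$. For a $K_{s,s}$-free family $\cG$ and a pair $G\subset G'$, one defines $M=M(G,G')$ to be a maximal set in the interval $[G,G']$ that still has at least $s$ strict supersets in $\cG$. The $K_{s,s}$-free property forces any such $M$ to contain at most $s-1$ sets of $\cG$, while maximality of $M$ implies that each one-element extension $M\cup\{x\}$ has at most $s$ supersets in $\cG$, so at most $sn$ sets of $\cG$ lie strictly above $M$. Summing the resulting bound $(s-1)\cdot sn$ over all $M\in 2^{[n]}$ (and handling separately the pairs for which $M$ is undefined) yields $O_s(n2^n)$. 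This structural use of $K_{s,s}$-freeness is the missing idea.
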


We believe that the upper bound in part (ii) of Theorem \ref{szaramajtol} can be improved by a factor of $\sqrt{n}$ and we propose the following conjecture.

\begin{conjecture}\label{ketszint} For any poset $P$ of height 2, we have $$La(n,P,P_2)\le O\left(n\binom{n}{\lfloor n/2\rfloor}\right).$$
\end{conjecture}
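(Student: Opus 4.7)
The plan is to reduce to the case of complete bipartite posets and combine the Katona–Tarj\'an size bound with a symmetric chain decomposition. Every height-2 poset $P$ embeds as a weak subposet into $K_{p,p}$, the poset consisting of $p$ minimal and $p$ maximal elements with every minimal lying below every maximal, where $p=|P|$: map the minimal elements of $P$ to distinct minimal elements of $K_{p,p}$ and similarly for the maximal elements, and note that every required comparison in $P$ is present in $K_{p,p}$. Hence any $P$-free family is also $K_{p,p}$-free, so it suffices to prove $La(n,K_{p,p},P_2) = O\bigl(n\binom{n}{\lfloor n/2 \rfloor}\bigr)$. Since $K_{p,p}$ contains $\vee_p$ as a weak subposet, Theorem~\ref{Katona_Tarjan} yields $|\cF| \le (1+o(1))\binom{n}{\lfloor n/2\rfloor}$ for any $K_{p,p}$-free $\cF$.

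Fix a symmetric chain decomposition $C_1,\dots,C_M$ of $2^{[n]}$ with $M=\binom{n}{\lfloor n/2\rfloor}$ chains, each of length at most $n+1$, and write $c_i = |\cF \cap C_i|$. The within-chain containment pairs total at most
\[
\sum_{i=1}^{M}\binom{c_i}{2} \le \frac{n+1}{2}\sum_{i=1}^{M} c_i = \frac{n+1}{2}|\cF| = O\!\left(n\binom{n}{\lfloor n/2\rfloor}\right),
\]
which already matches the target, so the burden falls entirely on the cross-chain pairs. For these, one exploits $K_{p,p}$-freeness chain by chain: writing $\cF \cap C_j = \{G_{j,1}\subsetneq\dots\subsetneq G_{j,c_j}\}$, if $p$ distinct elements of $\cF$ all lay below $G_{j,k}$ for some $k \le c_j - p + 1$, then together with $G_{j,k},G_{j,k+1},\dots,G_{j,k+p-1}$ they would form a copy of $K_{p,p}$. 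Hence $|\{F \in \cF : F \subsetneq G_{j,k}\}| \le p-1$ for all such $k$, and symmetrically for up-degrees from the bottom of each chain, bounding the contribution of the "interior" part of every chain by $O_p(|\cF|) = O(\binom{n}{\lfloor n/2\rfloor})$.

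The main obstacle is controlling the contribution of the $O(p)$ top (respectively bottom) elements of each chain, whose down-degrees (respectively up-degrees) are only bounded by the trivial $|\cF|$; naively this gives $O(M|\cF|) = O(\binom{n}{\lfloor n/2\rfloor}^2)$, which is too weak by roughly a factor of $\binom{n}{\lfloor n/2\rfloor}/n$. A promising route is to average over many chain decompositions (for example, those arising from random permutations of $[n]$), so that any fixed element lies in the extremal positions of only $O(n)$ decompositions in expectation, yielding the desired factor saving. An alternative is to iteratively peel off the $O(p)$ extremal elements of each chain and re-apply the $K_{p,p}$-constraint to the residual family, absorbing the surplus via repeated use of the size bound. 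Making either strategy rigorous is the crux of the proof, and we anticipate that doing so will require genuinely combining chain-averaging techniques with the rigid posetal structure of the Boolean lattice, rather than relying on any off-the-shelf Kővári–Sós–Turán bound (which on the $\binom{M}{2}$ bipartite pieces only produces the much weaker estimate $O(\binom{n}{\lfloor n/2\rfloor}^{2-1/p})$).
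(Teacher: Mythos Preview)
The statement you are attempting to prove is Conjecture~\ref{ketszint}; the paper does \emph{not} prove it. The strongest bound the paper establishes for height-2 posets is the $O(n2^n)$ of Theorem~\ref{szaramajtol}(ii), and the $O\!\bigl(n\binom{n}{\lfloor n/2\rfloor}\bigr)$ bound is explicitly left open. So there is no ``paper's own proof'' to compare against, and any correct argument here would be a new result.

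Your proposal is not such an argument: you yourself identify the main obstacle (the top and bottom $O(p)$ elements of each chain in the symmetric chain decomposition contribute a priori $O\bigl(\binom{n}{\lfloor n/2\rfloor}^2\bigr)$ cross-chain pairs) and then only sketch two possible workarounds---averaging over chain decompositions, or iterative peeling---without carrying either out. That is precisely the gap that makes this a conjecture rather than a theorem; the ``interior'' part of your argument is fine but essentially recovers only what the paper already knows, namely that the easy pairs are under control.

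There is also a genuine error earlier in the write-up. You claim that since $K_{p,p}$ contains $\vee_p$, Theorem~\ref{Katona_Tarjan} gives $|\cF|\le (1+o(1))\binom{n}{\lfloor n/2\rfloor}$ for $K_{p,p}$-free $\cF$. The implication runs the wrong way: $\vee_p\subseteq K_{p,p}$ means $\vee_p$-free $\Rightarrow$ $K_{p,p}$-free, hence $La(n,K_{p,p})\ge La(n,\vee_p)$, not $\le$. Indeed, as the Remark after Conjecture~\ref{ketszint} notes, two full consecutive middle layers are $K_{2,2}$-free and have size about $2\binom{n}{\lfloor n/2\rfloor}$, so the constant $1+o(1)$ is already false for $p=2$. (The big-$O$ version $|\cF|=O_p\bigl(\binom{n}{\lfloor n/2\rfloor}\bigr)$ is true, but requires a separate result on $La(n,K_{p,p})$, not Theorem~\ref{Katona_Tarjan}.)
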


\begin{remark}
Note that Conjecture \ref{ketszint}, if true, gives the best possible order of magnitude as the family $\binom{[n]}{\lfloor n/2\rfloor}\cup \binom{[n]}{\lfloor n/2\rfloor+1}$ is $K_{2,2}$-free (where $K_{2,2}$ denotes the poset on 4 elements $a,b,c,d$ with $a, b \le c, d$) and it has $\lceil n/2\rceil\binom{n}{\lfloor n/2\rfloor}$ containments. In fact, there are many other families that are $K_{2,2}$-free which have $\Omega(n\binom{n}{\lfloor n/2\rfloor})$ containments, as discussed below.

Let $A(n, 2\delta, k)$ denote the size of the largest family of subsets of $[n]$ such that each subset has size exactly $k$ and the symmetric difference of any pair of distinct sets is at least $2 \delta$. Graham and Sloane \cite{Graham_Sloane} showed that $A(n, 2\delta, k) \ge \frac{1}{q^{\delta-1}}\binom{n}{k}$ where $q$ is any prime power with $q \ge n$. Let $i$ be a fixed integer. Consider the family $\mathcal F$ consisting of all subsets of $[n]$ of size $\lfloor n/2\rfloor$, plus $A(n, 2i, \lfloor n/2\rfloor+i)$ subsets of size $\lfloor n/2\rfloor+i$ where the symmetric difference of any pair of distinct subsets is at least $2i$. The number of containments in $\mathcal F$  is at least $$A(n, 2i, \lfloor n/2\rfloor+i) \binom{\lfloor n/2\rfloor+i}{\lfloor n/2\rfloor} = \Omega \left(\frac{1}{n^{i-1}} \binom{n}{\lfloor n/2\rfloor+i}  n^i \right)= \Omega \left(n\binom{n}{\lfloor n/2\rfloor}\right).$$ Moreover, in $\mathcal F$, any two sets of size $\lfloor n/2\rfloor+i$ intersect in at most $\lfloor n/2\rfloor$ elements, thus $\mathcal F$ is $K_{2,2}$-free.
\end{remark}

Using an inductive argument, we generalize Theorem \ref{szaramajtol} and obtain the following result on the maximum number of $k$-chains in a $P$-free family. 

\begin{theorem}\label{danialt}  Let $l$ be the height of $P$.

\smallskip 

\textbf{(i)} If $l>k$, then 
$$La(n,P,P_k) = \Theta(La(n,P_{k+1},P_k)).$$
Moreover, 

$$La(n,P_{k+1},P_k)\le La(n,P,P_k)\le La(n,P_{|P|},P_k) \le \binom{|P|-1}{k}La(n,P_{k+1},P_k).$$

\textbf{(ii)} If $l\le k$, then $$La(n,P,P_k)=O(n^{2k-1/2}La(n,P_{l},P_{l-1})),$$
and there exists a poset $P$ of height $l$ such that $La(n,P,P_k)=\Theta(La(n,P_l,P_{l-1}))$ holds.
\end{theorem}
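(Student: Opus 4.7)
The two outer inequalities reflect inclusions of poset-free classes: since $h(P)>k$, every copy of $P$ in $\cF$ contains a $(k+1)$-chain, so $P_{k+1}$-freeness implies $P$-freeness, giving $La(n,P_{k+1},P_k)\le La(n,P,P_k)$; and any chain $F_1\subsetneq\cdots\subsetneq F_{|P|}$ in $\cF$ is a weak copy of $P$ via any linear extension of $P$, so $P$-freeness implies $P_{|P|}$-freeness, giving $La(n,P,P_k)\le La(n,P_{|P|},P_k)$. For the main inequality $La(n,P_{|P|},P_k)\le\binom{|P|-1}{k}La(n,P_{k+1},P_k)$, I use a level decomposition: given $\cF$ that is $P_{|P|}$-free (so of height at most $|P|-1$), define $g(F)$ to be the length of the longest chain of $\cF$ ending at $F$, giving a map $g\colon\cF\to\{1,\dots,|P|-1\}$ with $g(F)<g(F')$ whenever $F\subsetneq F'$; each level $g^{-1}(i)$ is thus an antichain. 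For every $k$-subset $S\subseteq\{1,\dots,|P|-1\}$, the subfamily $\cF_S:=\bigcup_{i\in S}g^{-1}(i)$ is a union of $k$ antichains, hence $P_{k+1}$-free. Each $k$-chain in $\cF$ uses $k$ distinct $g$-values and so lies in exactly one $\cF_S$; summing yields $N_k(\cF)=\sum_{|S|=k}N_k(\cF_S)\le\binom{|P|-1}{k}La(n,P_{k+1},P_k)$.

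\textbf{Plan for (ii), upper bound.} When $h(P)=l\le k$ the level decomposition alone is insufficient, since a $P$-free family may contain arbitrarily long chains. I propose induction on $k\ge l$. The base case $k=l$ should follow by a generalization of the argument proving Theorem~\ref{szaramajtol}(ii) (which handles $l=k=2$), extending it from $2$-chains to $l$-chains under the $P$-free constraint; this yields a bound of the form $La(n,P,P_l)=O(n^{2l-1/2}La(n,P_l,P_{l-1}))$. For the inductive step $k>l$, I would associate each $k$-chain to its bottom $(k-1)$-subchain and bound the number of valid top extensions $F\supsetneq F_{k-1}$ in $\cF$ by $O(n^2)$ using the $P$-free structure of the up-set $\{G\in\cF:G\supsetneq F_{k-1}\}$, obtaining $N_k(\cF)\le O(n^2)N_{k-1}(\cF)$. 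Iterating this step $k-l$ times combines with the base case to give $n^{2l-1/2}\cdot(n^2)^{k-l}=n^{2k-1/2}$, as claimed.

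\textbf{Plan for (ii), lower-bound example, and main obstacle.} For the matching example I would take $P$ to be a height-$l$ poset whose $P$-freeness forces $\cF$ to be structurally close to a $P_l$-free family. The prototype is $l=k=2$ with $P=2P_2$ (the disjoint union of two $2$-chains): $2P_2$-freeness forces all containments in $\cF$ to share a common element, and the extremal family $\{\emptyset\}\cup\binom{[n]}{\lfloor n/2\rfloor}$ achieves $\binom{n}{\lfloor n/2\rfloor}=La(n,P_2,P_1)$ containments. A careful analogue --- taking $P$ to force a common element in every $l$-chain, with the extremal family being a $P_l$-free extremizer attached to a common bottom set --- should give $\Theta(La(n,P_l,P_{l-1}))$ many $k$-chains for general $k,l$. \emph{The main obstacle} is the inductive step in the upper bound: for general families the ratio $N_k(\cF)/N_{k-1}(\cF)$ can be exponentially large in $n$ (e.g.\ $(3/2)^n-1$ for $\cF=2^{[n]}$), so the $O(n^2)$ bound must be established through a global use of $P$-freeness --- presumably via a saturation-type argument controlling the chain-extension degree in $\cF$.
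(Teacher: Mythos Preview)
Your argument for part (i) is correct and is essentially the paper's proof: the canonical level decomposition into at most $|P|-1$ antichains, then choosing $k$ levels and observing each $k$-chain lies in exactly one such choice.

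For part (ii), however, your plan has a genuine gap that you yourself flag. You propose to induct on $k$ with base case $k=l$ and inductive step $N_k(\cF)\le O(n^2)\,N_{k-1}(\cF)$. Neither piece is established. The base case ``generalize Theorem~\ref{szaramajtol}(ii) from $2$-chains to $l$-chains'' is itself the heart of the problem and requires a new idea; and the inductive ratio bound is false as stated, since the up-set of a fixed $F_{k-1}$ in a $P$-free family can have exponential size (e.g.\ $\{F_{k-1}\}$ together with a full middle level above it), so the number of top extensions of a given $(k-1)$-chain is not $O(n^2)$ in general.

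The paper circumvents both issues with a single device that you are missing: the \emph{$M$-set trick}. Given a $k$-chain $G_1\subsetneq\cdots\subsetneq G_k$ in a $K_{s,\dots,s}$-free family $\cG$ (with $s=|P|-1$ and $l$ levels), let $M=M(\cC)$ be a maximal set with $G_{k-1}\subseteq M\subseteq G_k$ that still has at least $s$ strict supersets in $\cG$. This yields three regimes. If $M$ is undefined, $G_{k-1}$ has at most $s-1$ supersets and one reduces to counting $(k-1)$-chains. If $M\subsetneq G_k$, then by maximality any $G_k\supseteq M$ lies above some $M\cup\{x\}$ with at most $s$ supersets, so there are at most $sn$ choices for $G_k$. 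Crucially, in either case with $M$ defined, the subfamily of $\cG$ below $M$ is $K'$-free, where $K'$ is the complete $(l-1)$-level poset, because $M$ already has $s$ sets above it. This height reduction is what makes the induction work: one bounds the $(k-1)$-chains below $M$ by the inductive hypothesis for height $l-1$, then sums $\sum_{M}La(|M|,P_{l-1},P_{l-2})$ over all $M$ of a given size and observes this sum is at most $La(n,P_l,P_{l-1})$. The induction therefore runs on $k$ with base case $k=2$ (already done in Theorem~\ref{szaramajtol}), not $k=l$, and simultaneously drops $l$ by one inside each $M$.

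For the existence claim in (ii), your $2P_2$ idea is in the right spirit but the general construction is left vague. The paper takes $P=K_{s,\dots,s}$ ($l$ levels, $s>k-l+1$) and builds $\cG$ from the $(l-1)$ full levels realizing $La(n-(k-l+1),P_l,P_{l-1})$ together with, above each top-level set $G$, a fixed tower $G\cup\{n-k+l\}\subsetneq G\cup\{n-k+l,n-k+l+1\}\subsetneq\cdots$ of $k-l+1$ additional sets. These towers are pairwise incomparable above the top level, so $\cG$ remains $K_{s,\dots,s}$-free, and every $(l-1)$-chain in the base extends uniquely to a $k$-chain, giving $c(P_k,\cG)=\Theta(La(n,P_l,P_{l-1}))$.
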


Then we focus on specific forbidden posets $P$. By generalizing a construction of Katona and Tarj\'an, we prove the following lower bound on the maximum number of copies of $\wedge_s$ in $\{\wedge_k,\vee_l\}$-free families (note that $P_2$ is the special case $\wedge_1$).

\begin{theorem}
\label{cons}
For any integers $k,l,s$ with $s\le k-1$ we have $$La(n,\{\wedge_k,\vee_l\},\wedge_s) \ge \left(\binom{k-1}{s}\frac{l-1}{k+l-2}+o(1)\right)\binom{n}{\lfloor n/2\rfloor}.$$
\end{theorem}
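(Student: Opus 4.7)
The plan is to construct an explicit $\{\wedge_k,\vee_l\}$-free family $\cF\subseteq 2^{[n]}$ whose number of $\wedge_s$-copies realises the claimed lower bound, generalising the Katona--Tarj\'an construction (which handles $k=l=2$). I will assemble $\cF$ from many local ``bipartite gadgets,'' each a copy in the containment order of $K_{k-1,l-1}$: that is, $k-1$ lower sets, each contained in every one of $l-1$ upper sets. Within such a gadget, each lower set has exactly $l-1$ supersets and each upper set has exactly $k-1$ subsets in the gadget, so each upper set contributes $\binom{k-1}{s}$ copies of $\wedge_s$.

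Concretely, set $r=k+l-2$, reserve $Y=\{y_1,\ldots,y_r\}\subseteq[n]$, and write $Y_0=\{y_1,\ldots,y_{k-1}\}$. For each $M$ in a chosen base family $\cM\subseteq\binom{[n]\setminus Y}{m}$ (with $m$ near $\lfloor(n-r)/2\rfloor$), include in $\cF$ the $k-1$ lower sets $M\cup(Y_0\setminus\{y_i\})$ for $i=1,\ldots,k-1$, and the $l-1$ upper sets $M\cup Y_0\cup\{y_{k-1+j}\}$ for $j=1,\ldots,l-1$. Two sets from gadgets of different $M,M'\in\cM$ are incomparable, since $M,M'$ have equal size and are disjoint from $Y$, forcing $M=M'$ for any containment; hence $\cF$ is $\{\wedge_k,\vee_l\}$-free and contains exactly $\binom{k-1}{s}(l-1)|\cM|$ copies of $\wedge_s$.

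The main obstacle is choosing $\cM$ so that $|\cM|\ge\bigl(\tfrac{1}{k+l-2}+o(1)\bigr)\binom{n}{\lfloor n/2\rfloor}$. A single fixed marker set $Y$ gives only $|\cM|=\binom{n-r}{\lfloor(n-r)/2\rfloor}\sim 2^{-r}\binom{n}{\lfloor n/2\rfloor}$, short of the target by an exponential factor in $r$. To close the gap one combines many shifted copies of the gadget---most naturally by letting $Y$ vary over the $r$-element subsets of $[n]$ and taking the union of the resulting gadgets---and then verifies that the union remains $\{\wedge_k,\vee_l\}$-free (i.e.\ no new $\wedge_k$ or $\vee_l$ is created between sets from different shifts). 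A careful averaging argument (via a Vandermonde-style identity for the middle layer) then converts the cumulative $|\cM|$ into the desired constant fraction $1/(k+l-2)$ of $\binom{n}{\lfloor n/2\rfloor}$, and multiplying by $\binom{k-1}{s}(l-1)$ yields the claimed bound.
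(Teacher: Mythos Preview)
Your single-gadget construction is correct and essentially matches the local structure of the paper's construction: each upper set has out-degree exactly $k-1$ and each lower set has in-degree exactly $l-1$, so the gadget is $\{\wedge_k,\vee_l\}$-free and each upper set contributes $\binom{k-1}{s}$ copies of $\wedge_s$. You are also right that a single fixed marker set $Y$ falls short of the target by a factor exponential in $r$.

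The genuine gap is the scaling-up step. Your proposal to ``let $Y$ vary over all $r$-element subsets of $[n]$ and take the union'' does not work: the resulting family is in general \emph{not} $\{\wedge_k,\vee_l\}$-free, because sets from gadgets based on different $Y$'s can be comparable. Already for $k=l=2$ (so $r=2$, $Y_0=\{y_1\}$), a lower set $M'$ from the $Y'=\{3,4\}$ gadget can be strictly contained in an upper set $M\cup\{1,2\}$ from the $Y=\{1,2\}$ gadget, creating a $\vee_2$. The phrase ``then verifies that the union remains $\{\wedge_k,\vee_l\}$-free'' hides exactly the difficulty, and the promised ``careful averaging argument'' for reaching the constant $1/(k+l-2)$ is never supplied.

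The paper resolves this with a different mechanism. It partitions $[n]$ into consecutive blocks $A_1,\dots,A_m$ of size $k+l-3$ (not $k+l-2$) and takes, for each $j$, all $(\lfloor n/2\rfloor+1)$-sets meeting $A_j$ in exactly $k-1$ points and all $\lfloor n/2\rfloor$-sets meeting $A_j$ in exactly $k-2$ points, subject to the crucial side condition that no earlier block $A_i$ ($i<j$) is met in $k-2$ or $k-1$ points. This ``first active block'' rule makes the index $j$ uniquely recoverable from any set in the family and forces every containment to occur within a single block $A_j$, so the $\{\wedge_k,\vee_l\}$-freeness survives the union over $j$. The count of upper sets is then a geometric-type sum $\sum_j p_2^{\,j-1}p_1$ (where $p_1,p_2$ are the probabilities that a random subset of a block has intersection size $k-1$, respectively avoids sizes $k-2$ and $k-1$), which converges to $p_1/(1-p_2)=(l-1)/(k+l-2)$; this is where the target constant actually comes from. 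Your outline contains neither the mechanism that prevents cross-gadget comparabilities nor the summation that produces the constant.
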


We conjecture that this lower bound is sharp. 

\begin{conjecture}\label{main1}
For any integers $k,l,s$ with $s\le k-1$ we have $$La(n,\{\wedge_k,\vee_l\},\wedge_s)=\left(\binom{k-1}{s}\frac{l-1}{k+l-2}+o(1)\right)\binom{n}{\lfloor n/2\rfloor}.$$
\end{conjecture}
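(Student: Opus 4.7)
The lower bound is furnished by Theorem~\ref{cons}, so the task is to prove the matching upper bound. Let $\cF\subseteq 2^{[n]}$ be $\{\wedge_k,\vee_l\}$-free and for each $F\in\cF$ write $d^-(F)=|\{G\in\cF:G\subsetneq F\}|$. The $\wedge_k$-free hypothesis gives $d^-(F)\le k-1$ for every $F$, and since each copy of $\wedge_s$ in $\cF$ is specified by its top element together with an $s$-subset of its subsets in $\cF$, we have
\[
c(\wedge_s,\cF)=\sum_{F\in\cF}\binom{d^-(F)}{s}\le \binom{k-1}{s}\,|\cN(\cF)|,
\]
where $\cN(\cF)=\{F\in\cF:d^-(F)\ge 1\}$. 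Thus it suffices to prove the ``profile estimate''
\[
|\cN(\cF)|\le \left(\tfrac{l-1}{k+l-2}+o(1)\right)\binom{n}{\lfloor n/2\rfloor}. \qquad(\star)
\]

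A first approximation to $(\star)$ comes from assigning to each $G\in\cN(\cF)$ an arbitrary minimal element $\phi(G)\in\min(\cF)$ contained in $G$: since $\cF$ is $\vee_l$-free, every $F\in\min(\cF)$ has at most $l-1$ proper supersets in $\cF$, so the fibres of $\phi$ have size at most $l-1$ and $|\cN(\cF)|\le(l-1)|\min(\cF)|$. Combined with $|\cF|=|\min(\cF)|+|\cN(\cF)|\le (1+o(1))\binom{n}{\lfloor n/2\rfloor}$ from Theorem~\ref{Katona_Tarjan}, this already gives $|\cN(\cF)|\le(\tfrac{l-1}{l}+o(1))\binom{n}{\lfloor n/2\rfloor}$, which matches the conjecture when $k=2$; a symmetric argument handles $l=2$ as well.

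For general $k,l$ one must extract an additional factor of $\tfrac{k-1}{k+l-2}\cdot\tfrac{l}{l-1}$ by invoking the $\wedge_k$-freeness. The plan is to strengthen the above double count to
\[
(k-1)\,|\cN(\cF)|\le (l-1)\,|\min(\cF)|, \qquad(\star\star)
\]
which together with $|\min(\cF)|+|\cN(\cF)|\le (1+o(1))\binom{n}{\lfloor n/2\rfloor}$ immediately yields $(\star)$. We try to achieve $(\star\star)$ by refining $\phi$ into a fractional weighting $w(G,F)\ge 0$ on the containment pairs $F\subsetneq G$ with $F\in\cF$, $G\in\cN(\cF)$, such that the total weight leaving each $G\in\cN(\cF)$ equals $k-1$ while the total weight received by each $F\in\min(\cF)$ is at most $l-1$. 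The two constraints correspond exactly to the two forbidden posets and their maximum out- and in-degrees, and matching them requires balancing the flow through sets that are neither minimal nor maximal.

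The main obstacle is that elements lying in the middle of long chains of $\cF$ are neither minimal nor maximal, and neither $\wedge_k$- nor $\vee_l$-freeness provides a local constraint around them, so a naive weighting loses a factor as soon as chains of length $\ge 3$ appear. Under the additional assumption that $P_4$ is also forbidden, $\cF$ has at most three levels and the weighting argument (or equivalently, a flattening to a bipartite configuration) goes through, recovering the conjecture in that restricted setting. Handling long chains in general seems to require a genuinely new idea --- for instance a compression that preserves $|\cN(\cF)|$ and both forbidden-poset conditions while reducing the height of $\cF$ to $2$ --- which is exactly what prevents us from establishing the conjecture in full generality at present.
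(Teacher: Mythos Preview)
This statement is a \emph{conjecture}; the paper does not prove it in general, only in the special cases $s=k-1$, $k,l\le 5$, and under the extra hypothesis that $P_4$ is forbidden. Your write-up likewise stops short of a proof. The issue, however, is not just that your argument is incomplete: the reduction you propose is a dead end, because the ``profile estimate'' $(\star)$ is \emph{false}.

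Take $k\ge 3$, $l=2$, and let $\cF$ be the Katona--Tarj\'an family $\cF_{\wedge,\vee}$. It is $\{\wedge_2,\vee_2\}$-free, hence $\{\wedge_k,\vee_2\}$-free, and exactly half of its $2\binom{n-1}{\lfloor (n-1)/2\rfloor}\sim\binom{n}{\lfloor n/2\rfloor}$ members are non-minimal, so $|\cN(\cF)|=(\tfrac12+o(1))\binom{n}{\lfloor n/2\rfloor}$; yet $(\star)$ demands the coefficient $\tfrac{l-1}{k+l-2}=\tfrac{1}{k}\le\tfrac13$. The same example kills $(\star\star)$ outright. Thus your claim that ``a symmetric argument handles $l=2$'' is wrong (complementation exchanges $\wedge_s$ with $\vee_s$, so it only helps for $s=1$), and more fundamentally the bound $c(\wedge_s,\cF)\le\binom{k-1}{s}|\cN(\cF)|$ is too crude to ever reach the conjectured constant: a family can have many non-minimal sets each sitting over only one subset, contributing nothing to $c(\wedge_s,\cF)$ for $s\ge 2$ while inflating $|\cN(\cF)|$.

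The paper's partial results proceed through a genuinely different reduction. General $s$ is reduced to $s=1$ via an embedding lemma (Corollary~\ref{reductingtop2}): once $La(n,\{\wedge_k,\vee_l\},P_2)$ is known to attain the conjectured value, the extremal construction $\cF_{n,k,l}$ automatically maximises $c(\wedge_s,\cdot)$ as well. The $P_2$ bound is then obtained not by counting non-minimal sets but by isolating the ``problematic'' sets $F$ satisfying $(l-1)d^+(F)+(k-1)d^-(F)>(k-1)(l-1)$, manufacturing for each an auxiliary set $(U\setminus F)\cup D\notin\cF$, and using a Hall-type argument to show these auxiliary sets are essentially distinct (this is precisely where $P_4$-freeness, or the assumption $k,l\le 5$, enters). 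The weighted threshold $(l-1)d^+(F)+(k-1)d^-(F)$ is what replaces your crude condition $d^-(F)\ge 1$; it encodes both forbidden posets at once and is what lets the double count balance.
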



We can show that the above conjecture holds in the following cases. 

\begin{theorem}\label{easy}
Conjecture \ref{main1} holds in the following cases:

\smallskip  

(i) $s=k-1$,

\smallskip 

(ii) $k,l\le 5$.
\end{theorem}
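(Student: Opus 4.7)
For part (i), the strategy is to identify each copy of $\wedge_{k-1}$ with a unique ``top'' set, observe that such tops must be maximal in $\cF$, and bound their number via double counting. Writing $a(F) = |\{F' \in \cF : F' \subsetneq F\}|$, the $\wedge_k$-freeness gives $a(F) \le k-1$, and a top (set with $a(F) = k-1$) is the apex of exactly $\binom{k-1}{k-1} = 1$ copy of $\wedge_{k-1}$; so $c(\wedge_{k-1}, \cF)$ equals the number $|T|$ of tops. The key observation is that tops are maximal: if $F \in T$ had a strict superset $G \in \cF$, then $G$ would contain $F$ together with the $k-1$ proper subsets of $F$ in $\cF$ as $k$ distinct proper subsets, violating $\wedge_k$-freeness. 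Hence the $k-1$ proper subsets in $\cF$ of each top lie in $\cF \setminus T$. Counting pairs $(F, B)$ with $F \in T$, $B \subsetneq F$, $B \in \cF$, from both sides gives $(k-1)|T| \le (l-1)(|\cF| - |T|)$ (each such $B$ has at most $l-1$ strict supersets in $\cF$ by $\vee_l$-freeness, hence appears in at most $l-1$ pairs), whence $|T| \le \frac{l-1}{k+l-2}|\cF|$. Combining with $|\cF| \le La(n, \wedge_k) = (1+o(1))\binom{n}{\lfloor n/2 \rfloor}$ from Theorem \ref{Katona_Tarjan} matches the lower bound of Theorem \ref{cons}.

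For part (ii), the main reduction is the simple but crucial observation that $\wedge_k$-freeness forces $P_{k+1}$-freeness (a chain of length $k+1$ in $\cF$ would give the topmost element $k$ strict subsets), and dually $\vee_l$-freeness forces $P_{l+1}$-freeness. Thus whenever $\min(k, l) \le 3$, every $\{\wedge_k, \vee_l\}$-free family is automatically $P_4$-free, and the conjecture follows from the $P_4$-free result established elsewhere in the paper (as announced in the abstract). This reduces Theorem \ref{easy}(ii) to the finitely many triples $(k, l, s)$ with $k, l \in \{4, 5\}$ and $1 \le s \le k-2$. For each such triple I plan to refine the argument of (i): stratify $\cF$ according to the joint profile $(a(F), b(F))$, use the iterated structural fact that $a(F) = j$ forces every chain of strict supersets of $F$ in $\cF$ to have length at most $k-1-j$ (together with the dual statement), and combine this with $\sum_F a(F) = \sum_F b(F)$ to reduce bounding $c(\wedge_s, \cF) = \sum_F \binom{a(F)}{s}$ to a finite linear program whose optimum is then checked by hand to be at most $\binom{k-1}{s}\frac{l-1}{k+l-2}|\cF|$.

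The main difficulty is the verification in part (ii) for $\min(k, l) \ge 4$. Merely enumerating degree and chain-length constraints is not sufficient, because fractional LP solutions corresponding to biregular bipartite or ``butterfly''-like configurations nested between several layers can overshoot the conjectured constant; one also needs to invoke the fact that $\cF$ lives inside $2^{[n]}$, for instance via the Sperner-type layer-size bound $\binom{n}{\lfloor n/2 \rfloor}$ or a more refined cross-layer incidence argument. Incorporating this extra set-theoretic input cleanly in each of the leftover cases is the crux of the proof, and the proliferation of such constraints as $k$ and $l$ grow is what prevents the method from extending beyond $k, l \le 5$.
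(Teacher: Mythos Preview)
Your argument for part (i) is essentially identical to the paper's: identify the sets of out-degree $k-1$, show they have in-degree $0$, double count the arcs leaving this set against the $\vee_l$-free bound on in-degrees, and finish with Theorem~\ref{Katona_Tarjan}.

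For part (ii) your plan diverges from the paper, and contains a genuine gap. Your reduction of the cases $\min(k,l)\le 3$ to Theorem~\ref{p4free} is valid (the paper indeed records this as a corollary of Theorem~\ref{p4free}), though it inverts the paper's logical order: there Theorem~\ref{easy} is proved first, independently, as a warm-up illustrating the method later used for Theorem~\ref{p4free}. The gap is in the remaining cases $k,l\in\{4,5\}$. You propose a finite LP on the profile $(a(F),b(F))$ together with the chain-length constraints $a(F)=j \Rightarrow$ no chain of length $>k-1-j$ above $F$, but you yourself note that this cannot suffice: the butterfly $\{a,b<v<d,e\}$ satisfies every local constraint you list, yet has edge-to-vertex ratio $8/5>3/2$ when $k=l=4$. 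Saying that ``one also needs to invoke the fact that $\cF$ lives inside $2^{[n]}$'' names the problem but supplies no mechanism.

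The paper's mechanism is concrete and not LP-based. For each problematic set $F$ one manufactures a small family $\cN(F)\subseteq 2^{[n]}$ of sets of the form $(U_i\setminus F)\cup D_j$ with $D_j\subsetneq F\subsetneq U_i$ in $\cF$, and then proves three things: (a) at least one element of $\cN(F)$ lies outside $\cF$; (b) as $F$ ranges over problematic sets, these witnesses are distinct; and (c) the augmented family $\cF\cup\bigcup_F\cN(F)$ is still $\{\wedge_M,\vee_M\}$-free for a constant $M=M(k,l)$, hence has size $(1+o(1))\binom{n}{\lfloor n/2\rfloor}$ by Theorem~\ref{Katona_Tarjan}. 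Combining (a)--(c) gives $|\cF|+|\{\text{problematic }F\}|\le (1+o(1))\binom{n}{\lfloor n/2\rfloor}$, which is exactly the extra inequality needed to close the LP. This ``phantom vertex'' augmentation is the missing set-theoretic input; without it or a substitute of comparable strength, the argument for $k,l\in\{4,5\}$ does not go through.
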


Moreover, we prove Conjecture \ref{main1} under the additional assumption that $P_4$ is forbidden. We remark that this is indeed a natural assumption, since the extremal families showing the lower bound in Theorem \ref{cons} do not even contain $P_3$ (see Section \ref{construction}).
\begin{theorem}\label{p4free}
For any integers $k,l,s$ with $s\le k-1$ we have, $$La(n,\{\wedge_k,\vee_l,P_4\},\wedge_s)=\left(\binom{k-1}{s}\frac{l-1}{k+l-2}+o(1)\right)\binom{n}{\lfloor n/2\rfloor}.$$
\end{theorem}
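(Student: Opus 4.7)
The lower bound is Theorem~\ref{cons}, so only the matching upper bound needs work. Let $\cF\subseteq 2^{[n]}$ be $\{\wedge_k,\vee_l,P_4\}$-free. First I would partition $\cF$ according to the role each set plays in its chain structure: let $\cA_4$ be the isolated members of $\cF$, and among the non-isolated members let $\cA_1$ be those with no proper subset in $\cF$, $\cA_3$ those with no proper superset in $\cF$, and $\cA_2=\cF\setminus(\cA_1\cup\cA_3\cup\cA_4)$ the ``middle'' sets. Since any chain in $\cF$ has length at most $3$, each of $\cA_1,\cA_2,\cA_3$ is an antichain; the case of $\cA_2$ uses $P_4$-freeness, because a comparable pair inside $\cA_2$ would extend, via the mandatory sub- and superset in $\cF$, to a copy of $P_4$.

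Writing $d^-(T)$ for the number of $F\in\cF$ with $F\subsetneq T$ and $d^+(B)$ for the dual quantity, the next step is a sharpened degree bound on $\cA_2$: $d^-(T)\le k-2$ for every $T\in\cA_2$, and dually $d^+(B)\le l-2$ for every $B\in\cA_2$. Indeed, $T\in\cA_2$ has a proper superset $T'\in\cA_3$, and every $F\subsetneq T$ with $F\in\cF$ also satisfies $F\subsetneq T'$; together with $T$ itself this gives at least $d^-(T)+1$ distinct members of $\cF$ strictly inside $T'$, and $\wedge_k$-freeness bounds this quantity by $k-1$.

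I would then count copies of $\wedge_s$ by their top as $W_s=\sum_{T\in\cA_2\cup\cA_3}\binom{d^-(T)}{s}$, and apply the convexity inequality $\binom{x}{s}\le\frac{x}{k-1}\binom{k-1}{s}$, valid for every integer $0\le x\le k-1$ (with the sharper variant using $k-2$ when $T\in\cA_2$). This reduces $W_s$ to a linear combination of $|\cA_1|,|\cA_2|,|\cA_3|$ and of the total number $E$ of containment pairs in $\cF$. The $\vee_l$-freeness of $\cF$ together with the derived bound on $d^+$ at $\cA_2$ yields $E\le (l-1)|\cA_1|+(l-2)|\cA_2|$, and dually $E\le (k-1)|\cA_3|+(k-2)|\cA_2|$. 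Since $\cF$ is in particular $\vee_l$-free, Theorem~\ref{Katona_Tarjan} gives $|\cF|\le\binom{n}{\lfloor n/2\rfloor}(1+O(1/n))$, which controls $|\cA_1|+|\cA_2|+|\cA_3|$.

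The \emph{main obstacle} is the final combinatorial optimization. The linear inequalities above, taken at face value, form an LP that admits ``spurious'' optima with $|\cA_2|$ large whose value exceeds the target $\binom{k-1}{s}\frac{l-1}{k+l-2}\binom{n}{\lfloor n/2\rfloor}$. To rule these out one must exploit the fact that every $3$-chain $A\subsetneq B\subsetneq T$ inside $\cF$ forces the induced containment pair $(A,T)$ to be present and hence to occupy one of the at most $l-1$ supersets of $A$ in $\cF$; encoding this ``direct versus indirect pair'' interaction as an extra constraint shrinks the feasible region enough that its optimum coincides with the $P_3$-free extremal configuration of Theorem~\ref{cons}, giving $W_s\le\bigl(\binom{k-1}{s}\tfrac{l-1}{k+l-2}+o(1)\bigr)\binom{n}{\lfloor n/2\rfloor}$.
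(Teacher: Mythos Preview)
Your setup (the antichain decomposition $\cA_1,\cA_2,\cA_3,\cA_4$, the sharpened degree bounds $d^-(T)\le k-2$ and $d^+(T)\le l-2$ on $\cA_2$, and the reduction of $W_s$ via $\binom{x}{s}\le \frac{x}{k-1}\binom{k-1}{s}$) is correct and clean, but the final optimization does not close, and the fix you sketch cannot close it either. Concretely, take $k=l=4$ and let every component of $G_\cF$ be a copy of the poset $S$ on five elements $a,b<v<d,e$. This is $\{\wedge_4,\vee_4,P_4\}$-free; in your notation $a,b\in\cA_1$, $v\in\cA_2$, $d,e\in\cA_3$, and all of your linear constraints hold with equality or slack. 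Yet $S$ has $8$ comparable pairs among $5$ elements, so $E/|\cF|=8/5>3/2$, strictly above the target $\frac{(k-1)(l-1)}{k+l-2}=3/2$. Crucially, the four ``indirect'' pairs $(a,d),(a,e),(b,d),(b,e)$ forced by transitivity through $v$ are \emph{already} fully present in $S$, so the ``direct versus indirect'' interaction you propose to add as an extra constraint is already saturated in this example and cannot cut off the spurious optimum. Your LP is therefore intrinsically too weak: no local inequality expressed in the variables $|\cA_i|$, $E_{ij}$ can bring the bound below $8/5$ here, because $S$ realizes them.

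The paper's proof avoids this obstacle by changing what is being bounded. Instead of comparing $E$ to $|\cF|$, it enlarges $\cF$ to a family $\cG=\cF\cup\bigcup_{F\in\cF_1}\cN(F)$, where $\cF_1$ is the set of ``overloaded'' $F$ (those with $(l-1)d^+(F)+(k-1)d^-(F)>(k-1)(l-1)$) and $\cN(F)=\{(U\setminus F)\cup D:F\subset U,\ D\subset F,\ U,D\in\cF\}$. Two lemmas do the work: (a) $\cG$ is still $\{\wedge_{k^2l^2},\vee_{k^2l^2}\}$-free, so $|\cG|\le(1+o(1))\binom{n}{\lfloor n/2\rfloor}$ by Theorem~\ref{Katona_Tarjan}; and (b) $|\cF_1|\le |\cG\setminus\cF|$, proved by a Hall-type matching argument in which $P_4$-freeness is used to bound, for a fixed $F\in\cF_1$, how many other problematic $F'$ can share a phantom set in $\cN(F)\setminus\cF$. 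Combining gives $|\cF|+|\cF_1|\le(1+o(1))\binom{n}{\lfloor n/2\rfloor}$, and then
\[
(k+l-2)\,c(P_2,\cF)=\sum_{F}\bigl((l-1)d^+(F)+(k-1)d^-(F)\bigr)\le (k-1)(l-1)\bigl(|\cF\setminus\cF_1|+2|\cF_1|\bigr),
\]
since on $\cF_1$ one uses the crude bound $d^+\le k-2$, $d^-\le l-2$. The passage from $c(P_2,\cF)$ to $c(\wedge_s,\cF)$ then follows as in Corollary~\ref{reductingtop2}. In the $k=l=4$ example this is exactly the step you are missing: for each middle vertex $v$ of a copy of $S$ one manufactures a new set $(U\setminus v)\cup D\notin\cF$, raising the vertex count from $5$ to $6$ per component, which restores $E\le\frac32|\cG|$. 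The essential idea---converting local degree excess into extra vertices of a larger but still $\vee_M$-free family---has no counterpart in your LP, and without it the argument cannot reach the target constant.
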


\begin{corollary}
Conjecture \ref{main1} holds if either $k$ or $l$ is at most 3. 
\end{corollary}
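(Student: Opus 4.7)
The plan is to derive this corollary as an immediate consequence of Theorem~\ref{p4free} by showing that, under the hypothesis $\min(k,l)\le 3$, the condition of being $\{\wedge_k,\vee_l\}$-free automatically forces the family to be $P_4$-free, so the extra forbidden chain in Theorem~\ref{p4free} comes for free.

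First I would record the simple poset-embedding observation: whenever four sets form a chain $A_1\subsetneq A_2\subsetneq A_3\subsetneq A_4$, the configuration $(A_4;A_1,A_2,A_3)$ is a copy of $\wedge_3$, and by forgetting one or two of the bottom elements one also obtains copies of $\wedge_2$ and $\wedge_1$. Symmetrically, $(A_1;A_2,A_3,A_4)$ is a copy of $\vee_3$, and similarly provides copies of $\vee_2$ and $\vee_1$. Consequently, if a family $\cF$ contains a copy of $P_4$, it contains copies of $\wedge_r$ and $\vee_r$ for every $1\le r\le 3$.

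Now suppose $\cF\subseteq 2^{[n]}$ is $\{\wedge_k,\vee_l\}$-free with $k\le 3$ or $l\le 3$. By the observation just made, such an $\cF$ cannot contain $P_4$, otherwise it would contain either $\wedge_k$ (when $k\le 3$) or $\vee_l$ (when $l\le 3$). Hence the collection of $\{\wedge_k,\vee_l\}$-free families coincides with the collection of $\{\wedge_k,\vee_l,P_4\}$-free families, and therefore
$$La(n,\{\wedge_k,\vee_l\},\wedge_s)=La(n,\{\wedge_k,\vee_l,P_4\},\wedge_s).$$
Applying Theorem~\ref{p4free} to the right-hand side yields exactly the asymptotic formula predicted by Conjecture~\ref{main1}, finishing the proof.

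There is essentially no obstacle here: the only content of the argument is the embedding check for $\wedge_r$ and $\vee_r$ with $r\le 3$ inside $P_4$, which is immediate from the Hasse diagrams; Theorem~\ref{p4free} then does all the work.
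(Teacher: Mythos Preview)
Your proof is correct and is precisely the argument the paper intends: the corollary is stated immediately after Theorem~\ref{p4free} without proof because a $4$-chain is itself a weak copy of $\wedge_r$ and $\vee_r$ for every $r\le 3$, so when $\min(k,l)\le 3$ the $P_4$-free assumption in Theorem~\ref{p4free} is redundant.
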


Now we introduce some notation that is used throughout the paper.
°
\begin{notation}[Comparability graph]
\label{notationComparabilitygraph}
Given a family $\mathcal F \subset 2^{[n]}$, the undirected comparability graph $G_\cF$, and the directed comparability graph $\overrightarrow{G}_\cF$ corresponding to $\mathcal F$, are graphs whose vertex sets are equal to $\cF$ and whose edge sets are defined as follows:
Two vertices $F,F'\in \cF$ are connected by an edge/arc (i.e. directed edge) if and only if $F$ and $F'$ are in containment. In $\overrightarrow{G}_\cF$ the arc is directed from $F$ to $F'$ if $F'\subseteq F$. A component of a family $\cF$ is a subfamily the vertices of $G_\cF$ correspond to form a connected component of $G_\cF$.
\end{notation}

\begin{remark}
Note that the $\wedge_k$-free property is equivalent to the fact that the out-degree $d^+(v)$ of every vertex $v$ in $\overrightarrow{G}_\cF$ is at most $k-1$ and the $\vee_l$-free property is equivalent to the fact that the in-degree $d^-(v)$ of every vertex $v$ in $\overrightarrow{G}_\cF$ is at most $l-1$. The number of pairs of $\cF$ in containment equals the number of edges in $G_\cF$ and the number of arcs in $\overrightarrow{G}_\cF$ (that is the sum of the out-degrees). More generally, we have
\[
c(\wedge_s,\cF)=\sum_{v\in V(\overrightarrow{G}_\cF)}\binom{d^+(v)}{s}.
\]
\end{remark}

\noindent
\textbf{Structure of the paper.} The rest of the paper is organized as follows. In Section \ref{gen_bounds}, we prove Theorem \ref{szaramajtol} and Theorem \ref{danialt}, concerning general bounds on the number of containments and $k$-chains in $P$-free families and we also determine the order of magnitude of $La(n,T,P_2)$ for tree posets $T$ of height 2. In Section \ref{construction}, we prove Theorem \ref{cons} by constructing a $\{\wedge_k,\vee_l\}$-free family with many copies of $\wedge_s$. In Section \ref{overviewpluseasy}, we illustrate our method by proving Theorem \ref{easy}. Finally, in Section \ref{p_4_free_section}, we prove Theorem \ref{p4free} by applying this method.


\section{General bounds}
\label{gen_bounds}

In this section we prove bounds on the maximum number of pairs in containment in $P$-free families. In our proofs we will use the following class of posets: if $a_1,a_2,\dots, a_s$ are positive integers, then the \textit{complete multi-level poset} $K_{a_1,a_2,\dots,a_s}$ has $\sum_{i=1}^sa_i$ elements $p^1_1,p^1_2,\dots p^1_{a_1},\dots,p^s_1,p^s_2,\dots,p^s_{a_s}$ such that $p^j_i< p^{j'}_{i'}$ if and only if $j<j'$. Observe that any poset $P$ of height $l$ is contained in the $l$-level poset $K_{|P|-l+1, |P|-l+1,\dots,|P|-l+1}$.

\begin{proof}[Proof of Theorem \ref{szaramajtol}]
To prove (i), observe first that any $P$-free family is $P_{|P|}$-free, and if the height of $P$ is at least 3, then any $P_3$-free family is $P$-free. This shows the first two inequalities. To prove the last inequality first observe that by Theorem \ref{gp}, it is enough to consider families $\cF$ consisting of $|P|-1$ full levels and determine the value 
\[
\max_{0\le i_1<i_2<\ldots<i_{|P|-1}\le n}\sum_{1\le l<j\le |P|-1}\binom{n}{i_j}\binom{i_j}{i_l}.
\]
As claimed by Theorem \ref{gp}, $\binom{n}{i_j}\binom{i_j}{i_l}$ is maximized when $i_l,i_j-i_l$, and $n-i_j$ differ by at most 1. Furthermore, if 
$i_j\notin ((2/3-\varepsilon)n,(2/3+\varepsilon)n)$ or $i_l\notin ((1/3-\varepsilon)n,(1/3+\varepsilon)n)$, then $\binom{n}{i_j}\binom{i_j}{i_l}=o(\binom{n}{2n/3}\binom{2n/3}{n/3}) = o(La(n,P_3,P_2))$. So, if we consider the graph $G$ with vertex set $\{i_1,i_2,\dots,i_{|P|-1}\}$ where $i_s<i_t$ are joined by an edge if and only if $\binom{n}{i_t}\binom{i_t}{i_s}=\Theta(La(n,P_3,P_2))$, then $G$ is triangle-free. Therefore the number of edges in $G$ is at most $\lfloor \frac{(|P|-1)^2}{4}\rfloor$. This finishes the proof of part (i).

The lower bound of (ii) is given by the family $\cF_{\wedge,\vee}$ constructed by Katona and Tarj\'an \cite{KT}: $$\cF_{\wedge,\vee}=\binom{[n-1]}{\lfloor \frac{n-1}{2}\rfloor }\cup 
\left\{F\cup \{n\}: F\in \binom{[n-1]}{\lfloor \frac{n-1}{2}\rfloor}\right\}.$$ Indeed, all the connected components of the comparability graph of $\cF_{\wedge,\vee}$ have size two, so $\cF_{\wedge,\vee}$ is $\{\wedge_2,\vee_2\}$-free and $c(P_2,\cF_{\wedge,\vee})=\binom{n-1}{\lfloor \frac{n-1}{2}\rfloor } = \Omega\left(\binom{n}{\lfloor n/2\rfloor}\right)$.   

To prove the upper bound of (ii) observe that if a family $\cF\subseteq 2^{[n]}$ is $P$-free, then in particular it is $K_{|P|-1,|P|-1}$-free, so we obtain 
\[
La(n,P,P_2)\le La(n,K_{|P|-1,|P|-1}, P_2).
\] 
Therefore to finish the proof it is enough to show $La(n,K_{s,s}, P_2)\le O_s(n2^n)$ for any given integer $s$. Let $\cG \subseteq 2^{[n]}$ be a $K_{s,s}$-free family and for any pair $G,G'\in \cG$ with $G\subset G'$ let us define $M=M(G,G')$ to be a set with $G\subseteq M\subseteq G'$ which is maximal with respect to the property that there exist at least $s$ sets $G_1,G_2,\dots, G_s\in \cG$ with $M\subsetneq G_i$ $i=1,2,\dots,s$.

First let us consider those pairs $G\subset G'$ for which $M$ cannot be defined. This means that $G$ is contained in at most $s-1$ other sets of $\cG$ and thus the number of such pairs is at most $(s-1)|\cG|=O_s(2^n)$.

Next, for a fixed $M\in 2^{[n]}$, let us consider the pairs $G\subset G'$ with $M=M(G,G') = G'$. Note that since $M$ is contained in $s$ sets of $\cG$ (and $\cG$ is $K_{s,s}$-free), $M$ can contain at most $s-1$ sets from $\cG$. In particular, the number of pairs $G\subset G'$ with $M(G,G')=G'$ is at most $(s-1)|\cG|=O_s(2^n)$.

Finally, let us consider the pairs $G\subset G'$ with $M=M(G,G')\subsetneq G'$. Each such $G'$ contains a set $M'=M\cup \{x\}$ with $x\notin M$. The number of such $M'$ is $|G'|-|M|\le n$, and for a given $M'$, the number of $G'$ containing $M'$ is at most $s$ (namely, $M'$ and $s-1$ other sets from $\cG$) as otherwise $M'$ would be fit to play the role of $M(G,G')$. So the number of $G'$ containing $M$ is at most $sn$. (Moreover, $M$ can contain at most $s-1$ sets from $\cG$, so there are at most $(s-1)$ choices for $G$.) Therefore the number of pairs $G\subset G'$ with $M(G,G')=M$ is at most $(s-1) \cdot sn$. Summing over all sets $M$ and adding the other types of pairs in containment we obtain
\[
c(P_2,\cG)\le O_s(2^n)+s(s-1)n2^n = O_s(n2^n).
\]
which finishes the proof of the upper bound of (ii).
\end{proof}

\bigskip





\bigskip
Before turning to the proof of Theorem \ref{danialt}, let us determine the order of magnitude of $La(n,T,P_2)$  for any tree poset $T$ of height 2. 

\begin{proposition}
For any tree poset $T$ of height 2 with $|T|\ge 3$, we have $$La(n,T,P_2) = \Theta\left(\binom{n}{\lfloor n/2\rfloor}\right).$$
\end{proposition}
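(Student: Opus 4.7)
The lower bound is immediate: since $T$ is a tree poset of height~$2$ with $|T|\ge 3$, its Hasse diagram is a tree with at least two edges and hence has a vertex of degree $\ge 2$; this vertex is either a minimum (yielding $\vee_2\subseteq T$) or a maximum (yielding $\wedge_2\subseteq T$). Therefore the Katona--Tarj\'an family $\cF_{\wedge,\vee}$ used in the proof of Theorem~\ref{szaramajtol} is $\{\wedge_2,\vee_2\}$-free and hence $T$-free, and it realizes $\Omega\!\left(\binom{n}{\lfloor n/2\rfloor}\right)$ containment pairs.

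For the upper bound I would argue by induction on $|T|$ that $c(P_2,\cF)=O_T\!\left(\binom{n}{\lfloor n/2\rfloor}\right)$ for every $T$-free $\cF$. The base case $|T|=3$ (so $T\in\{\wedge_2,\vee_2\}$), and more generally all star cases $T\in\{\vee_k,\wedge_k\}$, are immediate: $T$-freeness bounds $d^+(F)$ or $d^-(F)$ uniformly by $|T|-2$, so $c(P_2,\cF)\le(|T|-2)|\cF|$, and Bukh's size bound $|\cF|\le(1+o(1))\binom{n}{\lfloor n/2\rfloor}$ for tree posets of height~$2$ finishes these cases.

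For the inductive step, let $T$ be a non-star tree poset of height~$2$ with $|T|=t\ge 4$; its Hasse diagram then has at least two internal vertices, so it admits both a leaf-maximum $v_+$ (with Hasse-neighbor $u_+$) and a leaf-minimum $v_-$ (with Hasse-neighbor $u_-$). Set $T_\pm:=T-v_\pm$, which are tree posets of height~$2$ with $t-1$ vertices, and define the high-degree subfamilies
\[
\cU_+:=\{F\in\cF:d^+(F)\ge t\},\qquad \cU_-:=\{F\in\cF:d^-(F)\ge t\}.
\]
The key lemma is that $\cU_+$ is $T_+$-free: any weak embedding $\phi\colon T_+\to\cU_+$ would extend to a weak embedding of $T$ into $\cF$ by sending $v_+$ to any superset of $\phi(u_+)$ in $\cF\setminus\phi(T_+)$, which exists since $d^+(\phi(u_+))\ge t>|\phi(T_+)|=t-1$, and the extension is valid because $v_+$ is a Hasse-leaf and thus comparable only to $u_+$ in $T$. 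Symmetrically, $\cU_-$ is $T_-$-free, so by induction both $|\cU_\pm|$ and $c(P_2,\cU_\pm)$ are $O\!\left(\binom{n}{\lfloor n/2\rfloor}\right)$.

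To conclude, I would split the containment pairs $(F,G)$ of $\cF$: pairs with $F\notin\cU_+$ contribute at most $t|\cF|$ in total (each such $F$ has fewer than $t$ supersets), and symmetrically for pairs with $G\notin\cU_-$; both contributions are $O\!\left(\binom{n}{\lfloor n/2\rfloor}\right)$ by Bukh's size bound. The remaining cross pairs $F\in\cU_+,G\in\cU_-$ would be handled by a two-sided version of the same extension argument, showing first that $\cU_+\cap\cU_-$ is $(T-v_+-v_-)$-free (using $P_2$-freeness and Sperner directly when $|T|=4$), and then deducing via the induction hypothesis and a counting step that each $G\in\cU_-$ admits only $O_T(1)$ subsets in $\cU_+$ --- otherwise the rich up-neighborhood of some $F\in\cU_+$ below $G$ combined with the rich down-neighborhood of $G$ would force a weak embedding of $T$ into $\cF$. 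The technical core and the main obstacle of the proof is precisely this cross-pair analysis, which is where the tree structure of $T$ is crucially used to preclude the simultaneous occurrence of many high-$d^+$ and high-$d^-$ sets in comparability with one another.
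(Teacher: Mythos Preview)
Your lower bound is correct and matches the paper's. The inductive framework for the upper bound is reasonable as far as it goes, and your key lemma that $\cU_+$ is $(T-v_+)$-free is correct. The gap is exactly where you locate it: the cross-pair count.

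The specific claim you sketch --- that each $G\in\cU_-$ has only $O_T(1)$ subsets in $\cU_+$ --- is \emph{false} for longer caterpillars. Take $T$ to be the height-$2$ poset whose Hasse diagram is a path on six vertices, $m_1\!-\!M_1\!-\!m_2\!-\!M_2\!-\!m_3\!-\!M_3$, so $|T|=6$. Build $\cF$ from one top set $G$, arbitrarily many subsets $F_1,\ldots,F_s\subset G$, six extra leaf-subsets $H_1,\ldots,H_6\subset G$, and for each $i$ five private supersets $G_i^1,\ldots,G_i^5\supset F_i$ unrelated to everything else. Then $G\in\cU_-$ and each $F_i\in\cU_+$, yet $\cF$ is $T$-free: every set in $\cF$ is minimal or maximal, so in any weak copy the images of $M_1$ and $M_2$ must each contain at least two distinct minimal sets, but $G$ is the only set with more than one subset --- forcing $\phi(M_1)=\phi(M_2)$, a contradiction. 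Hence $G$ has $s$ subsets in $\cU_+$ with $s$ arbitrary. What \emph{is} true is that the total number of cross pairs is $O_T(|\cF|)$; but establishing this seems to require a global argument of the same kind as the paper's, at which point your induction becomes superfluous.

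The paper's proof is both shorter and sidesteps the obstacle entirely. Since $\cF$ is $P_{|T|}$-free, partition it canonically into at most $|T|-1$ antichains $\cA_1,\ldots,\cA_m$. For each pair $i<j$, if the bipartite comparability graph on $\cA_i\cup\cA_j$ has more than $|T|(|\cA_i|+|\cA_j|)$ edges then it contains a subgraph of minimum degree at least $|T|$, into which $T$ embeds greedily; the point is that the antichain bipartition guarantees that every neighbour of a vertex on the $\cA_i$ side is a superset and every neighbour on the $\cA_j$ side is a subset --- precisely the separation of up- and down-degrees your $\cU_\pm$ machinery was trying to manufacture. Summing over the $\binom{m}{2}<|T|^2$ pairs and applying Sperner's bound to each $|\cA_i|$ gives $c(P_2,\cF)<|T|^3\binom{n}{\lfloor n/2\rfloor}$.
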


\begin{proof}
The lower bound follows from the lower bound of Theorem \ref{szaramajtol} (ii).

Now we prove the upper bound. Note that a $T$-free family $\mathcal F$ does not contain a chain of length $\abs{T}$. Therefore, we can partition $\mathcal F$ into antichains $\mathcal A_1, \mathcal A_2, \ldots, \mathcal A_m$, such that $m \le \abs{T}-1$ where $\mathcal A_i$ is the family of minimal elements in $\mathcal F \setminus (\cup_{k =1}^{i-1} \mathcal A_k)$ for every $i$. Sperner's theorem implies $|\mathcal A_i|\le \binom{n}{\floor{n/2}}$ for every $i$.

For $i < j$, let $n_{i,j}$ be the number of containments $A \subset B$ such that $A \in \mathcal A_i, B \in \mathcal A_j$. Notice that it is impossible that $A \subset B$ for $A \in \mathcal A_i$ and $B \in \mathcal A_j$ when $i > j$, so the number of $P_2$'s in $\mathcal F$ is $\sum_{i < j} n_{i,j}$.

We claim that for any $1 \le i < j \le m$, we have $n_{i,j} \le \abs{T}(\abs{\mathcal A_i}+\abs{\mathcal A_j})$. Indeed, suppose otherwise, and consider the comparability graph $G=G_{\cA_i\cup \cA_j}$. Then in $G$, there are at least $\abs{T} \abs{V(G)}$ edges, so the average degree in $G$ is at least $2\abs{T}$. It is easy to find a subgraph $G'$ of $G$ with minimum degree at least $\abs{T}$, and one can then embed $T$ greedily into $G'$, giving an embedding of $T$ into $\mathcal F$, a contradiction.  

Therefore, the number of $P_2$'s in $\mathcal F$ is $$\sum_{1\le i < j\le m} n_{i,j} \le \sum_{1\le i < j\le m} \abs{T}(\abs{\mathcal A_i}+\abs{\mathcal A_j})  < \abs{T}^3 \binom{n}{\floor{n/2}}.$$
\end{proof}

\begin{proof}[Proof of Theorem \ref{danialt}] The proof of \textbf{(i)} is similar to the proof of \textbf{(i)} in Theorem \ref{szaramajtol}. Observe first that any $P$-free family is $P_{|P|}$-free and any $P_l$-free family is $P$-free. This shows the first two inequalities. To prove the last inequality, consider the canonical partition of a $P$-free family $\cF$ into at most $|P|$ antichains. We can choose $k$ of them $ \binom{|P|}{k}$ ways, and in each of the resulting $k$-Sperner families there are at most $La(n,P_{k+1},P_k)$ $k$-chains. Note that we counted every $k$-chain in $\cF$ once.

\smallskip

To prove the bound in \textbf{(ii)}, let $K$ and $K'$ be the complete $l$-level and $(l-1)$-level posets with parts of size $s=|P|-1$.  Observe that if a family $\cF\subseteq 2^{[n]}$ is $P$-free, then in particular it is $K$-free,  so we obtain
$La(n,P,P_k)\le La(n,K, P_k)$.
We use induction on $k$. The base case $k=2$ is given by  Theorem \ref{szaramajtol}, and we note the proof is similar to the proof of Theorem \ref{szaramajtol}. Let us also mention that the statement is trivial for $l=1$, hence we can assume $l\ge 2$.

Let $\cG \subseteq 2^{[n]}$ be a $K$-free family and consider a $k$-chain $\cC$ consisting of the sets $G_1\subset \dots \subset G_k$ in $\cG$. Let $M=M(\cC)$ be a set with $G_1\subset \dots \subset G_{k-1}\subseteq M\subseteq G_k$ which is maximal with respect to the property that there exist at least $s$ sets $H_1,H_2,\dots, H_s\in \cG$ with $M\subsetneq H_i$ $i=1,2,\dots,s$. Let $\cM=\{M(\cC)~|~\cC~ \text{is a k-chain in}\ \cG\}$.

We will upper bound the number of $k$ chains $\cC=\{G_1\subset G_2 \subset \dots \subset G_k\}$ (with $G_i \in \mathcal G$) in each of the following 3 cases separately: $M(\cC)$ is not defined at all, $M(\cC)=G_k$ and finally $M(\cC)\subsetneq G_k$.

First let us consider those $k$-chains for which $M$ cannot be defined. This means that $G_{k-1}$ is contained in at most $s-1$ other sets of $\cG$ and thus the number of such $k$-chains is at most $(s-1)$ times the number of $(k-1)$-chains. If $l\le k-1$, then by induction, the number of $(k-1)$-chains is at most $O(n^{2k-2-1/2}La(n,P_{l},P_{l-1}))$, otherwise $l = k$ (recall that we assumed $l \le k$) and then \textbf{(i)} shows that the number of $(k-1)$-chains is $O(La(n,P_{l},P_{l-1}))$.

Next, consider a fixed set $M\in \cM$. Note that $\cG$ is $K$-free and $M$ is contained in $s$ sets of $\cG$, therefore $M$ cannot contain $K'$ in $\cG$. In particular, the number of chains of length $k-1$ contained in $M$ is at most $O(n^{2k-2-1/2}La(|M|,P_{l-1},P_{l-2}))$ by induction. In particular, the number of $k$-chains $\cC=\{G_1\subset G_2 \subset \dots \subset G_k\}$ for which $M(\cC)=G_k$ is $s \cdot O(n^{2k-2-1/2}La(|M|,P_{l-1},P_{l-2}))$.

Finally, let us now count the chains $\cC$ with $M=M(\cC)\subsetneq G_k$. Given such a chain $G_1\subset G_2 \subset \dots \subset G_k$, we know that $G_k$ contains a set $M'=M\cup \{x\}$ with $x\notin M$, as $M$ is its proper subset. The number of such sets $M'$ is $n-|M|\le n$ and for a given $M'$ the number of sets in $\cG$ containing $M'$ is at most $s$ (namely, $M'$ and $s-1$ other sets from $\cG$), as otherwise $M'$ would be fit to play the role of $M(\cC)$. It means, given the bottom $k-1$ sets in a chain that are contained in $M$, there are at most $sn$ ways to pick $G_k$. Thus the number of $k$-chains $\cC$ with $M=M(\cC)$ is at most $sn$ times the number of chains of length $k-1$ contained in $M$, which is at most $$O(n^{2k-2-1/2})La(|M|,P_{l-1},P_{l-2}),$$  by induction.

The total number of $k$-chains for which $M(\cC)$ could be defined is then 
\begin{equation}
\label{mcdefined}
\sum_{M\in \cM}O(sn \cdot n^{2k-2-1/2}La(|M|,P_{l-1},P_{l-2}))=O(n^{2k-1-1/2})\sum_{i=0}^n \sum_{M\in \binom{n}{i}\cap \cM}La(i,P_{l-1},P_{l-2}).
\end{equation}

\begin{claim} For any $i\le n$ we have $$\sum_{M\in \binom{n}{i}\cap \cM}La(i,P_{l-1},P_{l-2})\le La(n,P_l,P_{l-1}).$$ 
\end{claim}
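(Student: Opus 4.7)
My plan is to prove the claim by direct construction: I will exhibit a $P_l$-free family $\cF\subseteq 2^{[n]}$ containing at least $|\cM_i|\cdot La(i,P_{l-1},P_{l-2})$ copies of $P_{l-1}$, where $\cM_i:=\binom{[n]}{i}\cap \cM$. Since the summand $La(i,P_{l-1},P_{l-2})$ does not depend on $M$, the left-hand side of the claim is exactly $|\cM_i|\cdot La(i,P_{l-1},P_{l-2})$, so such a construction yields the desired inequality immediately.

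By Theorem \ref{gp} applied to $La(i,P_{l-1},P_{l-2})$, there exist levels $i>j_1>j_2>\cdots>j_{l-2}\ge 0$ so that the family $\bigcup_{t=1}^{l-2}\binom{[i]}{j_t}$ is $P_{l-1}$-free (being supported on only $l-2$ levels) and contains exactly $La(i,P_{l-1},P_{l-2})=\prod_{t=1}^{l-2}\binom{j_{t-1}}{j_t}$ copies of $P_{l-2}$, where $j_0=i$. Note that $j_1<i$ in any optimal choice, since setting $j_1=i$ would contribute a factor $\binom{i}{i}=1$. Write $L:=\{j_1,\ldots,j_{l-2}\}$. For each $M\in\cM_i$, I set $\cE_M:=\bigcup_{t=1}^{l-2}\binom{M}{j_t}\subseteq 2^M$, which is an isomorphic copy of the extremal family inside $2^M$. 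Finally, define
$$\cF\ :=\ \cM_i\ \cup\ \bigcup_{M\in\cM_i}\cE_M\ \subseteq\ 2^{[n]}.$$

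The two things to verify are: (a) $\cF$ is $P_l$-free; and (b) $\cF$ contains at least $|\cM_i|\cdot La(i,P_{l-1},P_{l-2})$ copies of $P_{l-1}$. For (a), every set in $\cF$ has size in $L\cup\{i\}$, which has exactly $l-1$ elements; since any chain uses each size at most once, no chain in $\cF$ has length $l$. For (b), fix any $M\in\cM_i$ and any $P_{l-2}$-chain $C$ in $\cE_M$; then $C\cup\{M\}$ is a $P_{l-1}$-chain in $\cF$, because every set of $\cE_M$ lies in $\binom{M}{j_t}$ for some $t$, hence is a proper subset of $M$ (as $j_t\le j_1<i$). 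These chains are distinct across pairs $(M,C)$ because the top element $M$ together with the remaining $l-2$ sets determines $C$. Summing over $M$ gives at least $|\cM_i|\cdot La(i,P_{l-1},P_{l-2})$ copies of $P_{l-1}$ in $\cF$.

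I do not anticipate a real obstacle, since the argument is a clean packing construction; the key conceptual point is that Theorem \ref{gp} lets us choose one common level-set $L$ realizing $La(i,P_{l-1},P_{l-2})$, which allows the copies $\cE_M$ inside different $M$'s to share the same $l-2$ levels, so that adjoining the $M$'s themselves adds exactly one new level and keeps chain lengths below $l$.
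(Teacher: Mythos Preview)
Your proposal is correct and takes essentially the same approach as the paper: both place the family on the $l-1$ size-levels $j_1,\dots,j_{l-2},i$ (hence it is $P_l$-free) and lower-bound the number of $(l-1)$-chains by pairing each size-$i$ set $M$ with the $La(i,P_{l-1},P_{l-2})$ many $(l-2)$-chains beneath it. The paper's version is a hair simpler --- it sums over all $M\in\binom{[n]}{i}$ rather than just $\cM_i$, so the family is just the union of $l-1$ full levels of $2^{[n]}$, yielding the slightly stronger inequality $\sum_{M\in\binom{[n]}{i}}La(i,P_{l-1},P_{l-2})\le La(n,P_l,P_{l-1})$ without needing to assemble $\bigcup_M\cE_M$ or to argue that $j_1<i$.
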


\begin{proof} By Theorem \ref{gp}, there are integers $i_1,\dots,i_{l-2}$ such that $La(i,P_{l-1},P_{l-2})$ is the number of $(l-2)$-chains in the family consisting of all sets of sizes $i_1,\dots,i_{l-2}$ in $2^{[i]}$. Therefore, $\sum_{M\in \binom{n}{i}}La(i,P_{l-1},P_{l-2})$ is equal to the number of $(l-1)$-chains consisting of sets of size $i_1,\dots,i_{l-2},i$. As these $l-1$ levels do not contain $P_l$, the number of $(l-1)$-chains is at most $La(n,P_l,P_{l-1})$ by definition.
\end{proof}

Using the above claim and \eqref{mcdefined}, we obtain that the number of $k$-chains for which $M(\cC)$ could be defined is $\sum_{i=0}^nO(n^{2k-1-1/2})La(n,P_l,P_{l-1})=O(n^{2k-1/2})La(n,P_l,P_{l-1})$. We already obtained that the number of $k$-chains for which $M(\cC)$ could not be defined is $O(n^{2k-2-1/2})La(n,P_{l},P_{l-1})$, which finishes the proof of the upper bound in \textbf{(ii)}.

Now we prove the remaining part of \textbf{(ii)}. Let $K=K_{s,s,\dots,s}$ be the complete $l$-level poset with $s> k-l+1$. Let $i_1,i_2,\dots, i_{l-1}$ be integers such that $n-(k-l+1)-i_1,i_1-i_2,\dots,i_{l-2}-i_{l-1},i_{l-1}$ differ by at most 1. By Theorem \ref{gp}, we know that the family $\cG'=\cup_{j=1}^{l-1}\binom{[n-(k-l+1)]}{i_j}$ realizes $La(n-(k-l+1),P_l,P_{l-1})$. Since $\cG'$ is $(l-1)$-Sperner, if we add sets to $\cG'$ such that no two $G,G'\in \binom{[n-(k-l+1)]}{i_{l-1}}$ are contained in the same newly added sets, then the resulting family will be $K$-free. If we add the sets 
\[
\left\{[n-(k-l+1)+1,n-(k-l+1)+j]\cup G: j\in [k-l+1], G\in \binom{[n-(k-l+1)]}{i_{l-1}}\right\}
\]
and denote the resulting family by $\cG$, then we have 
\[
c(\cG,P_k)=c(\cG',P_{l-1})=La(n-(k-l+1),P_l,P_{l-1})=\Omega_{k,l}(La(n,P_l,P_{l-1})).
\]
This finishes the proof.
\end{proof}

\section{Proof of Theorem \ref{cons}: Construction}
\label{construction}

We need to show a $\{ \wedge_k,\vee_l\}$-free family $\cF_{n,k,l}\subseteq 2^{[n]}$ with $c(\cF_{n,k,l},\wedge_s)=\left(\binom{k-1}{s}\frac{l-1}{k+l-2}+o(1)\right)\binom{n}{\lfloor n/2\rfloor}$. Let us partition $[n]$ into $A_1,A_2,\dots, A_m,A_{m+1}$ with $m=\lfloor \frac{n}{k+l-3}\rfloor$ and $|A_i|=k+l-3$ for all $i=1,2,\dots,m$. Let us define the family $$\cF_{n,k,l}=\cup_{j=1}^m\cF_{n,k,l,j}^+\cup \cup_{j=1}^m\cF_{n,k,l,j}^-, \ \textrm{ where}$$
\begin{equation*}
\begin{split}
\cF_{n,k,l,j}^+= & \left\{F\in \binom{[n]}{\lfloor n/2\rfloor +1}:  |F\cap A_j|=k-1, \text{ and } \hskip 0.1truecm \forall i<j \text{ we have } \hskip 0.1truecm |F\cap A_i|\neq k-2,\ k-1\right\}, \\
\cF_{n,k,l,j}^-=& \left\{F\in \binom{[n]}{\lfloor n/2\rfloor }: |F\cap A_j|=k-2, \hskip 0.1truecm \text{ and } \forall i<j \text{ we have } \hskip 0.1truecm |F\cap A_i|\neq k-2,\ k-1\right\}.
\end{split}
\end{equation*}

\begin{remark}
Observe that if $k=l=2$, then $k-2=0,k-1=1, k+l-3=1$. In particular, $|A_1|=1$ and let $A_1 = \{ 1\}$. So, $\cF^+_{n,2,2,1}=\{F\in \binom{[n]}{\lfloor n/2\rfloor +1}:  1\in F\}$ and $\cF^-_{n,2,2,1}=\{F\in \binom{[n]}{\lfloor n/2\rfloor }:  1\notin F\}$. As every set $F$ intersects $A_1 = \{ 1\}$ in either 0 or 1 element, all other $\cF^+_{n,2,2,j}$'s and $\cF^-_{n,2,2,j}$'s are empty and thus $\cF_{n,2,2}$ is equal (up to permutations of the ground set) to the family $\cF_{\wedge,\vee}$ of Katona and Tarj\'an described in the proof of Theorem \ref{szaramajtol}.
\end{remark} 

\bigskip

First we show that $\cF_{n,k,l}$ is $\{\wedge_k,\vee_l\}$-free. As sets in $\cF_{n,k,l}$ are of size $\lfloor n/2 \rfloor$ and $\lfloor n/2 \rfloor +1$, if $F\subset G$ holds for some $F,G\in \cF_{n,k,l}$, then we have $F\in \cup_{j=1}^m\cF_{n,k,l,j}^-,G\in \cup_{j=1}^m\cF_{n,k,l,j}^+$. Let $j(F)$ and $j(G)$ be the indices with $F\in \cF_{n,k,l,j(F)}^-, \ G\in \cF_{n,k,l,j(G)}^+$. We claim that $j(F)=j(G)$ holds. Indeed,  if $j(G)< j(F)$ (the case $j(G)>j(F)$ is similar), then $F\subset G$ implies $|F\cap A_{j(G)})|\le k-3$ (indeed, if the intersection size is equal to  $k-2$, then $j(F)=j(G)$ would hold, if it is $k-1$, it would mean that $F$ is not in the family, and if it is more than $k-1$ it would mean $G$ does not contain $F$) and thus $$|F\cap ([n]\setminus A_{j(G)})|\ge \lfloor n/2\rfloor-(k-3)>\lfloor n/2\rfloor +1-(k-1)=|G\cap ([n]\setminus A_{j(G)})|$$
contradicting $F\subset G$.

Furthermore, if $j(F)=j(G)$, then $|F\cap ([n]\setminus A_{j(G)})|=|G\cap ([n]\setminus A_{j(G)})|$, so $G$ contains those sets of $\cF_{n,k,l}$ that are of the form $G\setminus \{a\}$ with $a\in G\cap A_{j(G)}$. By definition of $\cF_{n,k,l,j}^+$ there are $k-1$ such $a$'s. Similarly, any $F$ is contained in those sets of $\cF_{n,k,l}$ that are of the form $F\cup \{a\}$ with $a\in A_{j(F)}\setminus F$. By definition of $\cF_{n,k,l,j}^-$ there are $k+l-3-(k-2)=l-1$ such $a$'s. Therefore, $\cF_{n,k,l}$ is indeed $\{\wedge_k,\vee_l\}$-free. Moreover, we have
\[
c(\wedge_s,\cF_{n,k,l})=\binom{k-1}{s}|\cup_{j=1}^m\cF_{n,k,l,j}^+|.
\]
Therefore it remains to show that $\cF^+_{n,k,l}:=\cup_{j=1}^m\cF_{n,k,l,j}^+$ has size $(\frac{l-1}{k+l-2}+o(1))\binom{n}{\lfloor n/2\rfloor + 1}$. To do this, let us introduce 
\[
p_1:=\frac{\binom{k+l-3}{k-1}}{2^{k+l-3}} \hskip 1truecm \text{and} \hskip 1truecm p_2:=\frac{2^{k+l-3}-\binom{k+l-3}{k-1}-\binom{k+l-3}{k-2}}{2^{k+l-3}}.
\]
Let us bound $|\cF^+_{n,k,l,j}|$ from below: given $H\subseteq \cup_{i=1}^jA_i$ with $|H\cap A_j|=k-1$ and $|H\cap A_i|\neq k-1,k-2$ let $$\cF_H := \{F\in \binom{[n]}{\lfloor n/2\rfloor +1}: F\cap \cup_{i=1}^jA_i=H\}$$ Clearly, $\cF^+_{n,k,l,j}$ is the union of $\cF_H$'s over all $H$ satisfying the required intersection property. Also, we have $$|\cF_H|=\binom{n-j(k+l-3)}{\lfloor n/2\rfloor +1-|H|}.$$
Observe that $0\le |H|\le j(k+l-3)$. So $j\le \log n$ implies that for any $\varepsilon>0$ and $H$ with the above property if $n$ is large enough, then we have $$2^{j(k+l-3)}\binom{n-j(k+l-3)}{\lfloor n/2\rfloor +1-|H|}\ge (1-\varepsilon)\binom{n}{\lfloor n/2\rfloor+1}.$$ 
As the number of sets $H$ possessing the required intersection property is $(p_22^{k+l-3})^{j-1}p_12^{k+l-3}$, we obtain
\[
|\cF^+_{n,k,l}|\ge (1-\varepsilon)\binom{n}{\lfloor n/2\rfloor+1}\sum_{j=1}^{\log n}p_2^{j-1}p_1\ge (1-\varepsilon')\frac{p_1}{1-p_2}\binom{n}{\lfloor n/2\rfloor+1}.
\]
This finishes the proof as $$\frac{p_1}{1-p_2}=\frac{l-1}{k+l-2}.$$

\qed 

\section{Overview of our method and Proof of Theorem \ref{easy}}
\label{overviewpluseasy}

\subsection{Proof of Theorem \ref{easy} (i) and a lemma}
\begin{proof}[Proof of Theorem \ref{easy} (i)]
Let $\cF\subseteq 2^{[n]}$ be a $\{\wedge_k,\vee_l\}$-free family and let us consider $\overrightarrow{G}_{\cF}$, its directed comparability graph. Let $A$ denote the set of vertices with out-degree $k-1$. As $\cF$ is $\wedge_k$-free, this is the maximum out-degree and $c(\wedge_{k-1},\cF)=|A|$. Observe that $d^-(a)=0$ for any $a\in A$, as if $(b,a)\in E(\overrightarrow{G}_{\cF})$ (i.e., there is a directed edge from $b$ to $a$), then $d^+(b)\ge k$ would hold (every out-neighbor of $a$ would be an out-neighbor of $b$ in addition to $a$) contradicting the $\wedge_k$-free property of $\cF$. This implies that $A$ is an independent set in $\overrightarrow{G}_{\cF}$ and there is no edge from $B=V(\overrightarrow{G}_{\cF})\setminus A$ to $A$. As $\cF$ is $\vee_l$-free also, the in-degree of every vertex in $B$ is at most $l-1$. Therefore double counting the edges between $A$ and $B$ we obtain
\[
(k-1)|A|\le (l-1)|B|=(l-1)(|\cF|-|A|).
\]
Rearranging gives
\[
c(\wedge_{k-1},\cF)=|A|\le \frac{l-1}{k+l-2}|\cF|\le \left(\frac{l-1}{k+l-2}+o(1)\right)\binom{n}{n/2}
\]
where for the last inequality we used Theorem \ref{Katona_Tarjan}. One can see that this bound is sharp by putting $s = k-1$ in Theorem \ref{cons}
\end{proof}


Below we show that if the construction $\cF_{n,k,l}$ (defined in Section \ref{construction}) maximizes the number of containments, then it maximizes the number of certain subposets in the family as well.

\begin{lemma}
  Assume that for some fixed integers $k,l$ we have $$La(n,\{\wedge_k,\vee_l\},  P_2)=\left(\frac{(k-1)(l-1)}{k+l-3}+o(1)\right)\binom{n}{\lfloor n/2\rfloor}.$$ Let $P$ be a poset on two levels not containing $\wedge_k$ and $\vee_l$ as a subposet. Also assume that the Hasse diagram of $P$ is a tree containing no 6-edge path. Then the number of copies of $P$ in a $\{\wedge_k,\vee_l\}$-free family $\cF\subset 2^{[n]}$ is asymptotically maximized by the construction $\cF_{n,k,l}$.
\end{lemma}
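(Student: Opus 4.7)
The plan is to transfer the hypothesized asymptotic extremality of $\cF_{n,k,l}$ for counting $P_2$'s to counting copies of $P$, by identifying the rigid local structure of $\cF_{n,k,l}$ and matching the resulting ``copy density'' with a tight universal bound.

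First I would isolate the bipartite graph $C^\ast$ underlying $\cF_{n,k,l}$: its vertices are the $(k-1)$-subsets and the $(k-2)$-subsets of a $(k+l-3)$-element set, with containment giving the edges. By the counting in Section \ref{construction}, the comparability graph of $\cF_{n,k,l}$ decomposes, up to an $o\!\left(\binom{n}{\lfloor n/2\rfloor}\right)$ contribution from boundary components, into many disjoint copies of $C^\ast$. Since the Hasse diagram $T$ of $P$ is connected, copies of $P$ in $\cF_{n,k,l}$ cannot cross components; setting
\[
\rho := c(P, C^\ast)/e(C^\ast),
\]
this immediately gives $c(P,\cF_{n,k,l}) = \rho\cdot c(P_2,\cF_{n,k,l})\cdot(1+o(1))$.

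The heart of the argument is the matching upper bound
\[
c(P,\cF)\le \rho\cdot c(P_2,\cF)+o\!\left(\binom{n}{\lfloor n/2\rfloor}\right)
\]
for any $\{\wedge_k,\vee_l\}$-free family $\cF$. I would pick an edge $e_0=(u_0,v_0)$ of $T$ to be a central edge: because the no-6-edge-path hypothesis gives $\operatorname{diam}(T)\le 5$, one can always ensure that the two rooted subtrees $T_u,T_v$ obtained by deleting $e_0$ have depth at most $2$. For each containment pair $(F_u,F_v)\in E(G_\cF)$, I bound the number of ordered injective maps $\phi:P\to\cF$ with $\phi(u_0)=F_u$ and $\phi(v_0)=F_v$ via a tree DP along $T_u$ and $T_v$: at each step the out-degree of an upper vertex is at most $k-1$ and the in-degree of a lower vertex is at most $l-1$, so the number of admissible images at each stage is pointwise at most its counterpart in $C^\ast$. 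By edge-transitivity of $\operatorname{Aut}(C^\ast)$ on $E(C^\ast)$, the per-edge count in $C^\ast$ equals $\rho\cdot|\operatorname{Aut}(P)|$ exactly; summing the pointwise bound over $(F_u,F_v)\in E(G_\cF)$ and dividing by $|\operatorname{Aut}(P)|$ yields the claim.

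Combining the two stages with the hypothesized value of $La(n,\{\wedge_k,\vee_l\},P_2)$ gives $c(P,\cF)\le c(P,\cF_{n,k,l})(1+o(1))$, the desired asymptotic optimality. The main obstacle is the pointwise tree-DP domination in the second stage: although the one-step inequality (from the in-/out-degree bounds) is immediate, propagating it through the recursion while respecting distinctness of the $\phi$-images requires a careful level-by-level coupling of the embeddings in $\cF$ with those in $C^\ast$, so that an already-used image in $\cF$ can always be matched to a blocked image in $C^\ast$. The depth-$\le 2$ reduction provided by the no-6-edge-path hypothesis is exactly what keeps this coupling transparent — the short recursion prevents the ``blocking'' effects (previously used images reducing the number of admissible extensions) from disrupting the pointwise comparison, so the tight ratio $\rho$ carries through without loss.
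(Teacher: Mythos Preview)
Your proposal is correct and follows essentially the same approach as the paper: both count ordered embeddings of $P$ by first placing one Hasse edge (bounded by $c(P_2,\cF)$, which is asymptotically maximized by $\cF_{n,k,l}$ under the hypothesis) and then greedily extending vertex by vertex, using that out-/in-degrees are at most $k-1$ and $l-1$, with the key point that in $\cF_{n,k,l}$ these per-step bounds are achieved exactly because the comparability graph has girth $\ge 6$ and $T$ has no $6$-edge path. Your added scaffolding --- the model graph $C^\ast$, the ratio $\rho$, edge-transitivity, the central-edge choice, and the worry about a ``level-by-level coupling'' --- is not needed in the paper's more direct argument: the crude per-step upper bound that ignores all collisions except with the parent already \emph{equals} the exact extension count in $\cF_{n,k,l}$, so no coupling between embeddings in $\cF$ and in $C^\ast$ is required.
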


\begin{proof}
We already showed that every set $v\in\cF^+_{n,k,l}$ has $d^+(v)=k-1$ and every set $v\in\cF^-_{n,k,l}$ has $d^-(v)=l-1$. (Actually, $\cF_{n,k,l}$ consists of several copies of the poset formed by the $(k-2)$-th and $(k-1)$-th levels of the Boolean lattice of order $k+l-3$.)

We try to find an embedding $f$ of $P$ into a $\{\wedge_k,\vee_l\}$-free family $\cF\subset 2^{[n]}$. First, pick an edge of the Hasse diagram of $P$ and embed its endpoints to two sets of $\cF$. We have as many possibilities as the number of containments among sets in $\cF$. By Theorem \ref{cons} and our assumption, this is asymptotically maximized by $\cF=\cF_{n,k,l}$.

We continue the embedding of the points of $P$ as follows. We pick a point $p\in P$ that is already embedded, and embed all points that are connected to $p$ in the Hasse diagram and are not already embedded. If $p$ is in the upper (or lower) level of $P$, then the number of our possibilities is maximal if $d^+(f(p))=k-1$ (or $d^-(f(p))=l-1$, respectively). This is true if $\cF=\cF_{n,k,l}$. Moreover, if $\cF=\cF_{n,k,l}$ we will not get stuck during the embedding when trying to embed a point $p\in P$ into a set $F\in\cF$ that is already used, because the comparability graph of $\cF_{n,k,l}$ has no cycle shorter than 6 edges and the Hasse diagram of $P$ contains no 6-edge path.
\end{proof}

We can use the above lemma in the case $P=\wedge_s$ ($s\le k-1$) to obtain the following:

\begin{corollary}
\label{reductingtop2}
If for some fixed integers $k,l$ we have $$La(n,\{\wedge_k,\vee_l\}, P_2)=\left(\frac{(k-1)(l-1)}{k+l-3}+o(1)\right)\binom{n}{\lfloor n/2\rfloor},$$ then for any $s\le k-1$ we have
$$La(n,\{\wedge_k,\vee_l\},\wedge_s)=\left(\binom{k-1}{s}\frac{l-1}{k+l-3}+o(1)\right)\binom{n}{\lfloor n/2\rfloor}.$$
\end{corollary}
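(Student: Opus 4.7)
The plan is to apply the preceding lemma directly with the choice $P = \wedge_s$. The hypothesis of the corollary is exactly the hypothesis of the lemma, so the work reduces to (a) verifying that $\wedge_s$ meets the four structural conditions imposed on $P$ in the lemma, and (b) reading off the number of copies of $\wedge_s$ in the construction $\cF_{n,k,l}$ from Section \ref{construction}.

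For step (a), first note that $\wedge_s$ has one maximum element and $s$ minimal elements, so it lies on two levels. Since $s \le k-1$, $\wedge_s$ cannot contain $\wedge_k$ as a subposet (that would require $k$ distinct elements below a common top element). Since the upper level of $\wedge_s$ contains only a single element while $\vee_l$ has $l \ge 2$ pairwise incomparable maxima, $\wedge_s$ is also $\vee_l$-free. Finally, the Hasse diagram of $\wedge_s$ is the star $K_{1,s}$, which is a tree whose longest path has length two edges, so it certainly contains no path on six edges.

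For step (b), we exploit the structural description of $\cF_{n,k,l}$ already obtained in Section \ref{construction}: every set in $\cF^+_{n,k,l}$ has out-degree exactly $k-1$ in $\overrightarrow{G}_{\cF_{n,k,l}}$, while every set in $\cF^-_{n,k,l}$ has out-degree $0$. Consequently, each $F \in \cF^+_{n,k,l}$ serves as the apex of exactly $\binom{k-1}{s}$ copies of $\wedge_s$, and no set in $\cF^-_{n,k,l}$ is the apex of any copy, giving
$$c(\wedge_s, \cF_{n,k,l}) \;=\; \binom{k-1}{s}\,|\cF^+_{n,k,l}|.$$
Substituting the value of $|\cF^+_{n,k,l}|$ computed in Section \ref{construction} produces the claimed asymptotic formula as an upper bound via the lemma, and the matching lower bound is provided by Theorem \ref{cons}.

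No genuine obstacle is expected: the preceding lemma performs the hard reduction from arbitrary $\{\wedge_k,\vee_l\}$-free families to the specific construction $\cF_{n,k,l}$, and what remains is a trivial structural check on $\wedge_s$ together with a direct multiplication by $\binom{k-1}{s}$. The only mild subtlety is that the lemma is phrased as asymptotic maximization of the count, so one has to separately record the exact count of copies of $\wedge_s$ in $\cF_{n,k,l}$ (using that the apex must lie in $\cF^+_{n,k,l}$) and then pair it with Theorem \ref{cons} to conclude asymptotic equality.
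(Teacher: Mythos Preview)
Your proposal is correct and follows exactly the approach the paper intends: the paper's entire proof is the one-line remark ``We can use the above lemma in the case $P=\wedge_s$ ($s\le k-1$)'', and you have simply spelled out the verification that $\wedge_s$ satisfies the lemma's hypotheses together with the count $c(\wedge_s,\cF_{n,k,l})=\binom{k-1}{s}|\cF^+_{n,k,l}|$ already recorded in Section~\ref{construction}.
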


By Corollary \ref{reductingtop2}, notice that in order to prove part (ii) of Theorem \ref{easy}, it suffices to prove that $La(n,\{\wedge_k,\vee_l\}, P_2)=\left(\frac{(k-1)(l-1)}{k+l-3}+o(1)\right)\binom{n}{\lfloor n/2\rfloor}$ for $k, l \le 5$. We will prove this in the next subsection, by introducing a new method.


\subsection{Overview of our method}
In this subsection we illustrate our method by showing part (ii) of Theorem \ref{easy}. We start by giving a detailed proof of
\begin{equation}
\label{eq:easy1}
La(n,\{\wedge_4,\vee_4\}, P_2)=\frac{3}{2}\binom{n}{\lfloor n/2\rfloor} \left(1+O\left(\frac{1}{n}\right)\right).
\end{equation}
Then applying Corollary \ref{reductingtop2}, this would complete the proof of Theorem \ref{easy} (ii) in the case $k = l = 4$ (as noted before). Note that the lower bound in \eqref{eq:easy1} follows from Theorem \ref{cons}. Now we show the upper bound. 

\begin{proof}[Proof of Theorem \ref{easy} (ii) for $k = l = 4$]

Let $\mathcal F$ be a $\{\wedge_4,\vee_4\}$-free family of subsets of $[n]$. 
Let $G_{\mathcal F}$ be the comparability graph  corresponding to $\mathcal F$ (see Notation \ref{notationComparabilitygraph}).
Let $\mathcal K$ be the set consisting of the vertex sets of all (maximal) connected components of $G_{\mathcal F}$. Clearly, 
\begin{equation}
\label{sizeoforiginalF}
\abs{\mathcal F} = \sum_{C \in \mathcal K} \abs{C}.
\end{equation}
For the sake of brevity, we refer to a connected component just by its vertex set. Observe that, by definition, for any two connected components $C_1, C_2 \in \mathcal K$, the elements $v_1 \in C_1$ and $v_2 \in C_2$ are unrelated. 
For each connected component of $G_{\mathcal F}$ with vertex set $C \in \mathcal K$, let $G_C$ be the subgraph of $G_{\mathcal F}$ induced by $C$.
Then the number of containments in $\mathcal F$ is 

\begin{equation}
\label{total_edges}
c(P_2, \mathcal F) = \sum_{C \in \mathcal K} \abs{E(G_C)}
\end{equation}

Suppose that for each connected component $C \in \mathcal K$, the following inequality holds.
\begin{equation}
\label{edgesversusvertices}
\abs{E(G_C)} \le \frac{3}{2} \abs{C}.
\end{equation}
Then by \eqref{sizeoforiginalF} and \eqref{total_edges} we would get that $c(P_2, \mathcal F) \le \frac{3}{2} \abs{\mathcal F}$, and by Theorem \ref{Katona_Tarjan} we are done. But this is not necessarily true: Consider the connected component $S$ with elements $a,b,c,d,e$ where $a, b < v < d, e$. Then $G_S$ has 8 edges but $\abs{S} = 5$, so \eqref{edgesversusvertices} does not hold. In fact, we claim that if there is a connected component in $G_{\mathcal F}$ for which \eqref{edgesversusvertices} does not hold, then it must be isomorphic to $G_S$. Indeed, notice that for a connected component $C$, each vertex has degree at most 3 in $G_C$, then the inequality \eqref{edgesversusvertices} trivially holds. So we can assume that $G_C$ must have a vertex $v$ of degree at least 4. Now, it is impossible that $v$ is more than or less than 4 elements in $C$ since $\mathcal F$ is $\{\wedge_4,\vee_4\}$-free. Moreover, it is also impossible that $v$ is more than 3 elements $a,b,c$ and less than an element $d$, since then $d$ is above 4 elements of $C$, a contradiction. Similarly, it is impossible that $v$ is less than 3 elements and more than an element of $C$. Therefore, the only possibility is that $v$ is more than exactly 2 elements $a, b$ and less than exactly 2 elements $d, e$. Moreover, it is easy to check that in this case, the elements $a,b,c,d$ cannot be related to any other elements, proving our claim.

In order to fix the above mentioned problem, our idea is the following. We add some sets to $\mathcal F$ to produce a new family $\mathcal G$: Consider each subfamily $\mathcal S_i = \{A_i, B_i, V_i, D_i, E_i\}$ (with $A_i, B_i \subset V_i\subset D_i, E_i$ and $1 \le i \le m$) of $\mathcal F$ corresponding to a connected component of $G_{\mathcal F}$ which is isomorphic to $G_S$, and add exactly one set $F_i \in [A_i, D_i] \setminus \{V_i\}$ to it, where $[A_i, D_i]:=\{V \in 2^{[n]}: A_i \subsetneq V \subsetneq D_i\}$. Let $\mathcal S'_i = \mathcal S_i \cup \{F_i\}$ and let $\cG = \cF \cup \{F_1, F_2, \ldots, F_m\}$. We claim that each newly added set $F_i \in \mathcal S'_i$ is unrelated to all the other sets $L \in \mathcal G \setminus \mathcal S'_i$. Indeed, if $L  \in \mathcal G \setminus \mathcal F$ -- i.e., say $L = F_j$ for some $j \not = i$ -- then either $A_i$ or $B_i$ is related to $A_j$ or $B_j$, contradicting the assumption that they correspond to elements in different components of $G_{\mathcal F}$. If $L \in \mathcal F$, then again either $A_i$ or $B_i$ is related to $L$, a contradiction. This claim shows that the connected components of the comparability graph $G_{\mathcal G}$ corresponding to $\mathcal G$ are the same as those of $G_{\mathcal F}$ except that the components isomorphic to $G_S$ in $G_{\mathcal F}$ are replaced by $G_{S'}$.  Therefore, any set of $\mathcal G$ is related to at most one of the newly added sets $F_i$. So $\mathcal G$ is $\{\wedge_5,\vee_5\}$-free.  

Let $\mathcal K'$ be the set consisting of the vertex sets of connected components of $G_{\mathcal G}$. For each connected component $C \in  \mathcal K' \cap \mathcal K$ the inequality \eqref{edgesversusvertices} holds.  As we already noted, for each connected component $C \in \mathcal K' \setminus \mathcal K$, $G_C$ is isomorphic to $G_{S'}$.
Moreover, since $\abs{S'} = 6$ and $\abs{E(G_S)} = 8$, we have $\abs{E(G_S)} \le \frac{3}{2} \abs{S'}$. Therefore, by \eqref{total_edges}, $$ c(P_2, \mathcal F) = \sum_{C \in \mathcal K} \abs{E(G_C)} \le \frac{3}{2} \sum_{C \in \mathcal K'} \abs{C} = \frac{3}{2} \abs{\mathcal {G}}.$$
On the other hand, $\abs{\mathcal{G}} = \binom{n}{\lfloor n/2\rfloor} (1+O(\frac{1}{n}))$ by Theorem \ref{Katona_Tarjan} since $\mathcal{G}$ is $\{\wedge_5,\vee_5\}$-free. Combining this with the above inequality, the proof is complete.
\end{proof}

\bigskip

We leave the proof of the cases $k=4,l=5$ and $k=5,l=4$ to the reader and sketch the proof of the most technical of the cases: $k=5,l=5$. 

\begin{proof}[Proof of Theorem \ref{easy} (ii) for $k = l = 5$]
Again the lower bound follows from Theorem \ref{cons}. Now we show the upper bound. Consider a $\{\wedge_5,\vee_5\}$-free family $\cF$ and its comparability graphs $G_\cF$ and $\overrightarrow{G}_\cF$. Recall that, for any $F \in \cF$, $d(F)$ denotes the degree of $F$ in ${G}_\cF$ and $d^+(F), d^-(F)$ denote the out degree and in degree of $F$ in $\overrightarrow{G}_\cF$ respectively. We want to show the number of edges in $G_\cF$ is at most $(2+O(1/n))\binom{n}{n/2}$. (Then we would be done by applying Corollary ~\ref{reductingtop2}.)  Equivalently, we would like to show that $\sum_{F\in \cF}d(F)=\sum_{F\in \cF}(d^+(F)+d^-(F))$ is at most $(4+O(1/n))\binom{n}{n/2}$.  We say that a set $F\in \cF$ is \textit{problematic} if $d(F)\ge 5$. As $\cF$ is $\{\wedge_5,\vee_5\}$-free, trivially, we have $d(F)\le 8$ for any $F\in \cF$. Moreover, notice that $F$ cannot be a problematic set if $d^+(F)=0$ or $d^-(F)=0$. So there is a set $D$ contained in $F$, which implies that $F$ is contained in at most $3$ sets (otherwise, $D$ would be contained in $5$ sets, contradicting the $\vee_5$-free property of $\cF$). By a symmetric argument, we can conclude that $F$ contains at most $3$ sets, implying  $d(F)\le 6$. (Similar reasoning shows that if $\cF$ is $\{\wedge_k,\vee_l\}$-free, then $d(F)\le k-2+l-2$ holds.) Let $\cF_1$ denote the family of problematic sets. As noted before, for any $F \in \cF_1$ we have $1 \le d^+(F), d^-(F) \le 3$ and consider $U_1,...,U_{d^-(F)}$ with $F \subseteq U_i$ for $1\le i \le d^-(F)$, and $D_1,...,D_{d^{+}(F)}$ with $D_j \subseteq F$ for $1 \le j \le d^{+}(F)$. Let us define the following family $$\cN(F):=\{(U_i\setminus F)\cup D_j : 1\le i \le d^+(F), 1 \le j \le d^{-}(F)\}.$$ As the $U_i$'s are distinct supersets of $F$ and the $D_j$'s are dstinct subsets of $F$ we obtain that $|\cN(F)|=d^-(F)d^{+}(F)$ and $\cN(F) \cap \{U_1,...,U_{d^+(F)},D_1,...,D_{d^{-}(F)}\}=\emptyset$.

\begin{claim}\label{pupu1}
For any $F\in \cF_1$ we have $\cN(F)\setminus \cF\neq \emptyset$.
\end{claim}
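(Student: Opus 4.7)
The plan is to prove the stronger statement $\cN(F)\cap\cF=\emptyset$, which immediately yields $\cN(F)\setminus\cF=\cN(F)\neq\emptyset$ since $|\cN(F)|=d^-(F)\,d^+(F)\ge 1$. A short case analysis suffices. Because $F\in\cF_1$ is problematic, $d(F)\ge 5$, and by the paragraph preceding the claim, $1\le d^-(F),d^+(F)\le 3$. Hence $(d^-(F),d^+(F))\in\{(2,3),(3,2),(3,3)\}$, so in particular at least one of $d^-(F)$ and $d^+(F)$ equals $3$.

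First I would handle the subcase $d^+(F)=3$, which covers both $(2,3)$ and $(3,3)$. Fix any index $i$. The four sets $F,D_1,D_2,D_3$ are pairwise distinct proper subsets of $U_i$ that belong to $\cF$, hence $d^+(U_i)\ge 4$. The $\wedge_5$-free property then forces $d^+(U_i)=4$, so $F,D_1,D_2,D_3$ are the \emph{only} elements of $\cF$ strictly contained in $U_i$. For every $j$ the set $N_{i,j}=(U_i\setminus F)\cup D_j$ is a proper subset of $U_i$ (since $F\setminus D_j\ne\emptyset$ is contained in $U_i$ but not in $N_{i,j}$), and it differs from $F$ and from every $D_k$ by the disjointness observation recorded just before the claim. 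Consequently $N_{i,j}\notin\cF$ for all admissible $i,j$.

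The remaining case $(d^-(F),d^+(F))=(3,2)$ is handled by the exactly dual argument. For each fixed $j$, the four sets $F,U_1,U_2,U_3$ are distinct proper supersets of $D_j$ lying in $\cF$, so by $\vee_5$-freeness these are its only proper supersets in $\cF$. Every $N_{i,j}$ is a proper superset of $D_j$ different from $F$ and from every $U_k$, so once again $N_{i,j}\notin\cF$.

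I do not anticipate any real obstacle here: the two subcases are exact duals of each other, and the only routine technical point is the verification that $N_{i,j}\ne F$ and $N_{i,j}\notin\{U_1,\dots,U_{d^-(F)},D_1,\dots,D_{d^+(F)}\}$, both of which follow immediately from the facts that $U_i\setminus F$ is nonempty and disjoint from $F$ while every $D_j\subseteq F$.
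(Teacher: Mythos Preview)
Your argument is correct, and in fact you prove the stronger statement $\cN(F)\cap\cF=\emptyset$, whereas the paper only establishes the claim as stated. The paper's proof runs differently: assuming (say) $d^-(F)=3$, it looks at the three sets $(U_i\setminus F)\cup D_1$ for $i=1,2,3$ and observes that if all three belonged to $\cF$, then $D_1$ would lie below the seven distinct sets $F,U_1,U_2,U_3,(U_1\setminus F)\cup D_1,(U_2\setminus F)\cup D_1,(U_3\setminus F)\cup D_1$, contradicting $\vee_5$-freeness; hence at least one of them is outside $\cF$.

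Your route instead pins down the \emph{entire} list of subsets of $U_i$ (respectively supersets of $D_j$) in $\cF$: when $d^+(F)=3$ the four sets $F,D_1,D_2,D_3$ already saturate $d^+(U_i)\le 4$, so no $N_{i,j}$ can sneak in; the dual argument handles $(d^-(F),d^+(F))=(3,2)$. This is just as short and yields a strictly stronger conclusion, which incidentally would also streamline the subsequent disjointness Claim~\ref{pupu2} (since then $\cN(F)\setminus\cF=\cN(F)$). The only minor remark is that the ``disjointness observation recorded just before the claim'' in the paper refers to $\cN(F)\cap\{U_1,\dots,D_1,\dots\}=\emptyset$ and does not literally include $F$ itself, but you correctly supply the one-line reason $N_{i,j}\ne F$ (namely $U_i\setminus F\neq\emptyset$) in your final paragraph.
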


\begin{proof}
For any $F\in \cF_1$ we have $\max\{d^+(F),d^-(F)\}\ge 3$ (in fact, $\max\{d^+(F),d^-(F)\} = 3$ since $d^+(F),d^-(F) \le 3$). Suppose $d^-(F)\ge 3$ (the case when $d^+(F)\ge 3$ is similar) and with the above notation let us consider the sets $(U_i\setminus F)\cup D_1$ ($i=1,2,3$). They are all distinct and do not contain $F$. If they all belong to $\cF$, then $D_1$ is contained in at least 7 sets contradicting the $\vee_5$-free property of $\cF$.
\end{proof}

\begin{claim}\label{pupu2}
For any pair $F,F'\in \cF_1$ we have $(\cN(F)\setminus \cF)\cap(\cN(F')\setminus \cF)=\emptyset$.
\end{claim}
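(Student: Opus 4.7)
Suppose for contradiction that some $N\in(\cN(F)\setminus\cF)\cap(\cN(F')\setminus\cF)$ exists with $F,F'\in\cF_1$ distinct. Writing $N=(U\setminus F)\cup D=(U'\setminus F')\cup D'$ with $U,D,U',D'\in\cF$, $D\subsetneq F\subsetneq U$, and $D'\subsetneq F'\subsetneq U'$, the disjointness $D\cap(U\setminus F)=\emptyset$ yields the clean identities
\[
D=N\cap F,\quad U=N\cup F,\quad D'=N\cap F',\quad U'=N\cup F'.
\]
These force $D\subsetneq N\subsetneq U$ and $D'\subsetneq N\subsetneq U'$, the incomparability of $N$ with each of $F,F'$, and the further containments $D\subsetneq U'$ and $D'\subsetneq U$ inside $\cF$.

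The plan is to exhibit five pairwise distinct strict supersets of $D$ in $\cF$ (yielding a $\vee_5$) or five pairwise distinct strict subsets of $U$ in $\cF$ (yielding a $\wedge_5$), contradicting the hypothesis. Three supersets of $D$ are immediate: $F,U,U'$, pairwise distinct since $F=U'$ would force $F\supseteq N$ against the incomparability above (and similarly $F'\ne U$). The problematic property of $F$ gives $d^-(F)\ge 2$ and $d^+(F)\ge 2$, furnishing a fourth superset $U_2\ne U$ of $F$ (hence of $D$) and a second subset $D_2\ne D$ of $F$ (hence of $U$), which combined with $F,D,D'$ are four strict subsets of $U$; symmetric objects $U_2',D_2'$ arise from $F'$.

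The fifth set comes from a case analysis anchored on the equivalences $U=U'\iff F\triangle F'\subseteq N$ and $D=D'\iff (F\triangle F')\cap N=\emptyset$, which cannot both hold (else $F=F'$). \emph{(a)} If $D=D'$, or if $F\subsetneq F'$, then $D\subseteq F'$ and $F'$ is a fifth strict superset of $D$; the possible coincidences $U_2\in\{F',U'\}$ are ruled out using $U_2=U'\iff F\setminus D\subseteq F'$ together with the case assumption, and $U=U'$ under $D=D'$ would force $F=F'$. \emph{(b)} If $U=U'$, then $F'\subsetneq U'=U$ is a fifth strict subset of $U$, and $D_2=D'$ is excluded because it would force $D'\subseteq F$ and hence $(F'\setminus F)\cap N=\emptyset$, contradicting $U=U'$ together with $F\ne F'$. \emph{(c)} The remaining configuration ($F,F'$ incomparable, $D\ne D'$, $U\ne U'$) is the main obstacle: here one uses the problematic property of $F'$ to find $U_2'$, and shows $U_2'\notin\{F,F',U,U'\}$ via the identities (e.g.\ $U_2'=U\iff F'\setminus F\subseteq N$, which fails by the case assumption), concluding with a fifth superset of $D$ once $D\subseteq F'$, or switching to subsets of $U'$ by symmetry otherwise. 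Throughout, the systematic use of the explicit identities $U=F\cup N$, $D=N\cap F$ (and their primed versions) combined with $N$'s incomparability with $F,F'$ rules out every possible coincidence and yields the desired $\vee_5$ or $\wedge_5$ in $\cF$, giving the contradiction.
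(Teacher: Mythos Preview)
Your identities $D=N\cap F$, $U=N\cup F$ (and their primed analogues) are correct and useful, and the overall plan of manufacturing a $\vee_5$ or a $\wedge_5$ is the right one. However, two steps in your case analysis do not go through as written.

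\textbf{Case (a), sub-case $F\subsetneq F'$.} From $F\subseteq F'$ one has $U=N\cup F\subseteq N\cup F'=U'$. Your device for excluding $U_2=U'$, namely ``$U_2=U'\Longleftrightarrow F\setminus D\subseteq F'$'', is vacuous here since $F\setminus D\subseteq F\subseteq F'$ automatically; and nothing prevents $U_2=F'$ either, since $F'$ is a legitimate strict superset of $F$. In fact either $U\subsetneq U'$, in which case the forbidden configuration is $\{D,D_2,F,F',U\}\subsetneq U'$ (a $\wedge_5$ centred at $U'$, not a $\vee_5$ centred at $D$ as you propose), or $U=U'$ and your five-element list collapses to at most four sets. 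The paper handles the comparable case by a different one-line observation: after checking that the component is $\{D_1,D_2,F,F',U_1,U_2\}$ with $D_1,D_2\subsetneq F\subsetneq F'\subsetneq U_1,U_2$, every member of $\cN(F)$ meets $F'\setminus F$ while no member of $\cN(F')$ does, so the two families are disjoint outright.

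\textbf{Case (c).} You write ``$U_2'=U\Longleftrightarrow F'\setminus F\subseteq N$, which fails by the case assumption''. But the case assumption $U\neq U'$ is equivalent to $F\triangle F'\not\subseteq N$, i.e.\ to $(F\setminus F'\not\subseteq N)$ \emph{or} $(F'\setminus F\not\subseteq N)$; it does not yield $F'\setminus F\not\subseteq N$, so $U_2'=U$ cannot be excluded this way and the argument stalls. The paper's incomparable case is short and hinges on a fact you never invoke: $\max\{d^+(F),d^-(F)\}=3$. Taking $d^-(F)=3$ with strict supersets $U_1,U_2,U_3$ of $F$, either $U'\notin\{U_1,U_2,U_3\}$ and $D\subsetneq F,U_1,U_2,U_3,U'$ is a $\vee_5$; or $U'\supseteq F$, so $U'$ contains $F,F',D_1,D_2,D_1',D_2'$, forcing $\{D_1,D_2\}=\{D_1',D_2'\}$ (else a $\wedge_5$ at $U'$), and then $D_1\subsetneq F,F',U_1,U_2,U_3$ is a $\vee_5$.
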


\begin{proof}
Suppose to the contrary that $G\in (\cN(F)\setminus \cF)\cap (\cN(F')\setminus \cF)$ for some $F,F'\in \cF_1$.

\medskip

\textsc{Case I:} $F,F'$ are in containment, say $F\subsetneq F'$.

\smallskip

In this case the component of $F,F'$ in $\cF$ consists of six sets $D_1,D_2\subset F\subset F'\subset U_1,U_2$. Observe that every set in $\cN(F)$ contains an element in $F' \setminus F$ while this does not hold for any set in $N(F')$.

\medskip

\textsc{Case II:} $F,F'$ are not in containment.

\smallskip

Recall that we have $\max\{d^+(F),d^-(F)\} = 3$. In this case, we may assume $d^-(F)=3$ as the case $d^+(F)=3$ is symmetric. Let $U_1,U_2,U_3\in \cF$ and $U_1',U_2'\in \cF$ contain $F$ and $F'$ respectively and let $D_1,D_2,D_1',D_2'\in \cF$ with $$D_1,D_2\subset F,\ D_1',D_2'\subset F'$$ such that $D_1\subset G\subset U_1$ and $D_1'\subset G\subset U_1'$ hold. If $U_1'$ is not among the $U_i$'s, then $D_1$ is contained in at least 5 sets in $\cF$ contradicting the $\vee_5$-free property of $\cF$. Now $U_1'$ contains $F,F'$ and thus $D_1,D_2,D_1',D_2'$, so unless $\{D_1,D_2\}=\{D_1',D_2'\}$ we obtain a $\wedge_5$ in $\cF$. But if $D_1=D_i'$ for some $i=1,2$, then $D_1$ is contained in $F,F'$ and $U_1,U_2,U_3$ contradicting the $\wedge_5$-free property of $\cF$.
\end{proof}

\begin{claim}\label{pupu3}
The family $\cG:=\cF\cup (\cup_{F\in \cF_1}\cN(F))$ is $\{\wedge_{41},\vee_{41}\}$-free.
\end{claim}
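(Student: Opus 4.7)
The plan is to show $\cG$ is $\{\wedge_{41},\vee_{41}\}$-free by bounding, for every $G\in\cG$, the number of proper subsets of $G$ in $\cG$ by $40$; the analogous bound for proper supersets follows by the $\subseteq\leftrightarrow\supseteq$ symmetry of the construction of $\cN(F)$. I would associate to each $G\in\cG$ an ambient set $U(G)\in\cF$ satisfying $G\subseteq U(G)$: set $U(G):=G$ if $G\in\cF$, and $U(G):=U$ if $G=(U\setminus F)\cup D\in\cN(F)\setminus\cF$ for a fixed triple $(F,D,U)$. Then any $N\in\cF$ with $N\subsetneq G$ is a proper subset of $U(G)$ in $\cF$, and by the $\wedge_5$-freeness of $\cF$ there are at most $4$ such $N$.

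The bulk of the argument is bounding the number of $N\in\cG\setminus\cF$ with $N\subsetneq G$. By the preceding claim, every such $N$ determines a unique $F\in\cF_1$ with $N\in\cN(F)\setminus\cF$, and then a unique triple $(F,D,U)$ satisfying $D\subsetneq F\subsetneq U$ in $\cF$ and $N=(U\setminus F)\cup D$. I would bound the admissible triples as follows: (i) $D\subsetneq N\subsetneq U(G)$ gives at most $4$ choices for $D$ by $\wedge_5$-freeness of $\cF$; (ii) $F$ is a proper superset of $D$ in $\cF$ belonging to $\cF_1$, giving at most $4$ choices by $\vee_5$-freeness; (iii) $U$ is a proper superset of $F$ in $\cF$, giving at most $3$ choices since $F\in\cF_1$ implies $d^-(F)\le 3$. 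The crude product $4\cdot 4\cdot 3=48$ already shows $\cG$ is $\wedge_{53}$-free. To reach the sharper constant $41$, I would refine step (ii): if $G\in\cF$, then $F=G$ is impossible (because $N$ would contain the nonempty set $U\setminus G$), cutting (ii) down to $3$ and giving the total $4+4\cdot 3\cdot 3=40$; if $G=(U_0\setminus F_0)\cup D_0\in\cN(F_0)\setminus\cF$, I would split further on whether $F=F_0$ or $F\neq F_0$.

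The main obstacle is this last sub-case, $G\in\cN(F_0)\setminus\cF$. When $F=F_0$, the inclusions $N\cap F_0\subseteq D_0$ and $N\setminus F_0\subseteq U_0\setminus F_0$ (both forced by $N\subsetneq G$) imply $D\subseteq D_0$ and $U\subseteq U_0$, bounding this contribution by $d^+(F_0)\cdot d^-(F_0)-1\le 8$ after subtracting the self-representation of $G$. When $F\neq F_0$, I would trim the triple count using these same inclusions, arguing that either $F_0$ lies among the proper supersets of $D$ in $\cF$ (so that $F\neq F_0$ leaves at most $3$ options in step (ii)) or the element-wise constraints shrink the admissible $U$'s below $3$. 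Combining these sub-counts with the at most $4$ elements of $\cF$ below $G$, and invoking the preceding claim to prevent double-counting, yields the bound of $40$ and hence the $\{\wedge_{41},\vee_{41}\}$-freeness of $\cG$.
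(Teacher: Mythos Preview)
Your overall architecture is right and matches the paper's: reduce the apex to an element of $\cF$ via an ambient set, then bound the number of triples $(D,F,U)$ that can represent a set $N\in\cG\setminus\cF$ lying below it. Your count for $G\in\cF$ is correct and is essentially the paper's argument. The gap is the $G\notin\cF$ branch: the split into $F=F_0$ versus $F\neq F_0$ does not close. Even granting your bound of $8$ for $F=F_0$, the $F\neq F_0$ sub-case as you sketch it (``either $F_0$ is among the supersets of $D$, or element-wise constraints shrink the admissible $U$'s'') is not substantiated, and the naive arithmetic gives $4+8+4\cdot 3\cdot 3=48$, not $40$; the ``or'' alternative has no proof.

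The fix is that your own argument for $G\in\cF$ already works verbatim for all $G$, once you use $U(G)$ consistently. The key exclusion is not $F\neq G$ but $F\neq U(G)$: if $F=U(G)$ then $N=(U\setminus U(G))\cup D$ contains the nonempty set $U\setminus U(G)$, which is disjoint from $U(G)\supseteq G$, so $N\not\subseteq G$. Since $U(G)\in\cF$ is always one of the at most four proper supersets of $D$ in $\cF$, step~(ii) drops to $3$ choices for every $G\in\cG$, and you get $4+4\cdot 3\cdot 3=40$ with no case analysis. This is exactly how the paper proceeds (phrased as ``we can assume the centre lies in $\cF$''): it bounds the triples by fixing the outer element first (four choices) and then observing that the remaining two elements of the triple are chosen from the at most four comparabilities of the centre in $\cF$, one of which is already occupied, giving $3\times 3=9$. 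Your invocation of Claim~\ref{pupu2} for uniqueness of the triple is harmless but unnecessary, since you are only after an upper bound on the number of $N$'s.
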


\begin{proof}
Suppose not, and let $D,U_1,U_2,\dots, U_{41}$ be a copy of $\vee_{41}$ in $\cG$. (The $\wedge_{41}$ case is similar.) We can assume that $D \in \cF$ as every set $G\in \cG$ contains a set (maybe itself) from $\cF$. Observe that if $D$ is contained in $G\in \cG$, then $G$ has the form of $G=D'\cup (U'\setminus F')$, where $D', F', U' \in \cF$, with $D'\subset F'\subset U'$.  In particular $D\subset U'$, and as $\cF$ is $\vee_5$-free there is at most 4 choices of $U'$. Because of the $\wedge_5$-free property, for each such $U'$, there exist at most $3\times 3=9$ pairs $F',D'$ (such that $D'\subset F'\subset U'$ and $D', F', U' \in \cF$), so the maximum number of sets from $\cG\setminus \cF$ containing the set $D \in \cF$ is $4 \times 9 = 36$. As $\cF$ is $\vee_5$-free, there are at most $4$ sets of $\cF$ containing $D$. So in total, $D$ is contained in at most $36+4 = 40$ sets from $\cG$, a contradiction. 
\end{proof}

To finish the proof observe that by Theorem \ref{Katona_Tarjan}, Claims \ref{pupu1}, \ref{pupu2}, \ref{pupu3} we have $$|\cF|+|\cF_1|\le |\cF \cup (\cup_{F\in \cF_1}\cN(F))|=|\cG|\le (1+o(1))\binom{n}{\lfloor n/2\rfloor}.$$
On the other hand, $c(P_2,\cF)$ is the number of edges (half the degree sum) in $\overrightarrow{G}_\cF$ which is at most $$\frac{1}{2}\big{[}4(|\cF|-|\cF_1|)+8|\cF_1|\big{]}=2(|\cF|+|\cF_1|)\le (2+o(1))\binom{n}{\lfloor n/2\rfloor}.$$ 
\end{proof}


\section{Proof of Theorem \ref{p4free}}
\label{p_4_free_section}

To get the upper bound, first we combine the method introduced in the previous section with some weighting of the sets. Let $\cF\subseteq 2^{[n]}$ be a $\{\wedge_k,\vee_l,P_4\}$-free family and let us define $$\cF_1:=\{F \in \cF :  (k-1)(l-1) < (l-1)d^+(F) + (k-1)d^-(F) \}.$$

\noindent 
Note that for any $F \in \cF_1$ we have $d^+(F), d^-(F) \ge 1$ and consider $U_1,...,U_{d^-(F)} \in \cF$ with $F \subseteq U_i$ for $1\le i \le d^-(F)$, and $D_1,...,D_{d^{+}(F)} \in \cF$ with $D_j \subseteq F$ for $1 \le j \le d^{+}(F)$. Let us define the following family $$\cN(F):=\{(U_i\setminus F)\cup D_j : 1\le i \le d^-(F), \ 1 \le j \le d^{+}(F)\}.$$ By definition, we have $|\cN(F)|=d^-(F)d^{+}(F)$ and $\cN(F) \cap \{U_1,\ldots,U_{d^-(F)},D_1,\ldots,D_{d^{+}(F)}\}=\emptyset$. We will prove several properties of the family $\cF \cup (\cup_{F \in \cF_1} \cN(F))$.

\begin{lemma}\label{newfree}
$\cF \cup (\cup_{F \in \cF_1} \cN(F))$ is $\wedge_{k^2l^2}$-free and $\vee_{k^2l^2}$-free.
\end{lemma}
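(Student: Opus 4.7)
The plan is to bound the in-degree and out-degree of every vertex in the directed comparability graph of $\cG := \cF \cup \bigcup_{F\in \cF_1}\cN(F)$ by $k^2 l^2 - 1$. Since the construction $\cN(F) = \{(U_i \setminus F) \cup D_j\}$ uses the $U_i$'s above $F$ and the $D_j$'s below $F$ in a completely symmetric way, the two bounds are proved by dual arguments; I will describe the in-degree bound in detail and then sketch the dual.

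The key structural observation is that every $G \in \cG$ contains a canonical set $D(G) \in \cF$: if $G \in \cF$, take $D(G) := G$; otherwise $G \in \cN(F)$ for some $F \in \cF_1$, so $G = (U \setminus F) \cup D$ with $D \subsetneq F \subsetneq U$ all in $\cF$, and I set $D(G) := D$. Since $U \setminus F \neq \emptyset$, we have $D(G) \subsetneq G$ strictly whenever $G \notin \cF$. Consequently, any strict superset $G' \supsetneq G$ with $G' \in \cG$ is automatically a strict superset of $D(G)$.

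I will then split the count of such $G'$ into two cases. If $G' \in \cF$, then $G'$ is a strict superset of $D(G)$ inside $\cF$, so the $\vee_l$-freeness of $\cF$ yields at most $l-1$ choices. If $G' \in \cG \setminus \cF$, write $G' = (U' \setminus F') \cup D'$ with $D' \subsetneq F' \subsetneq U'$ in $\cF$ and $F' \in \cF_1$. Since $G' \subseteq U'$ and $G' \supsetneq D(G)$, the set $U'$ is a strict superset of $D(G)$ in $\cF$, giving at most $l-1$ options for $U'$; for each such $U'$, the $\wedge_k$-freeness of $\cF$ gives at most $k-1$ choices for $F' \subsetneq U'$, and then at most $k-1$ further choices for $D' \subsetneq F'$. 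This contributes at most $(l-1)(k-1)^2$ distinct sets $G'$ of the second type. Summing, $d^-_{\cG}(G) \le (l-1) + (l-1)(k-1)^2 = (l-1)(1+(k-1)^2) \le k^2 l \le k^2 l^2 - 1$, establishing that $\cG$ is $\vee_{k^2 l^2}$-free.

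For $\wedge_{k^2 l^2}$-freeness the argument is dual: replace $D(G)$ by the canonical upper witness $U(G) \in \cF$ (equal to $G$ if $G \in \cF$, and to $U$ in the representation $G = (U \setminus F) \cup D$ otherwise; strictness $G \subsetneq U(G)$ when $G \notin \cF$ holds because $F \setminus D$ is nonempty and disjoint from $G$), then count strict subsets of $U(G)$ in $\cG$ in the same way, with the roles of $k$ and $l$ and of the $\wedge$- and $\vee$-bounds exchanged. The proof uses only the $\{\wedge_k,\vee_l\}$-freeness of $\cF$; the $P_4$-freeness assumption in Theorem~\ref{p4free} is not needed for this lemma and should enter only in the subsequent steps. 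The one point to watch throughout is the strictness of the inclusions $D(G) \subsetneq G'$ and $G' \subsetneq U(G)$, which is what sharpens the $\vee_l$- and $\wedge_k$-bounds from $l$ and $k$ to $l-1$ and $k-1$; this is the main obstacle and the reason for introducing the canonical witnesses $D(G)$ and $U(G)$.
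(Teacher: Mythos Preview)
Your proof is correct and follows essentially the same approach as the paper: both introduce canonical witnesses $D(G)\subseteq G\subseteq U(G)$ in $\cF$ for every $G\in\cG$ and then control the supersets (resp.\ subsets) of $G$ by bounding the number of triples $(U',F',D')$ with $D'\subsetneq F'\subsetneq U'$ in $\cF$ that could represent them, using only the $\wedge_k$- and $\vee_l$-freeness of $\cF$. The paper phrases this as a pigeonhole contradiction while you give a direct degree bound (yielding the slightly sharper $(l-1)(1+(k-1)^2)$ and its dual), but the underlying idea is the same.
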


\begin{proof}

Note that every element $G \in \cup_{F \in \cF_1} \cN(F)$ has the form of $(U \setminus F) \cup D$ with some $U,D \in \cF$ and $F \in \cF_1$ such that $D \subseteq F \subseteq U$. If it is so, then we denote $U,D$ and $F$ by $U(G), D(G)$ and $F(G)$ respectively. We also define $U(G)=D(G)=F(G)=G$ for every $G \in \cF \setminus (\cup_{F \in \cF_1} \cN(F))$. We have the following 

\begin{observation}\label{kulonbozok}

Suppose we have two different $G_1,G_2  \in \cF \cup(\cup_{F \in \cF_1} \cN(F))$ such that $U(G_1)=U(G_2)$ and $D(G_1)=D(G_2)$. Then $F(G_1) \neq F(G_2)$.

\end{observation}








Now we start the proof of Lemma \ref{newfree}. We prove that $\cF \cup (\cup_{F \in \cF_1} \cN(F))$ is $\vee_{k^2l^2}-$free, the other case is similar.  

We prove it by contradiction. Let us suppose that there are distinct sets $G, G_1,\ldots,G_{k^2l^2} \in \cF \cup (\cup_{F \in \cF_1} \cN(F))$ with $G \subsetneq G_1,\ldots,G_{k^2l^2}$. Note that we can suppose without loss of generality that $G \in \cF$ (if not, then we use $D(G) \in \cF$ instead). As we can have at most $l-1$ different sets among $U(G_1),\ldots,U(G_{k^2l^2})$ by the $\vee_l$-freeness of $\cF$, we have $k^2l$ many different $G_i$'s (we call them $G'_1,\ldots,G'_{k^2l}$) with $U(G'_1)=\ldots=U(G'_{k^2l}) \in \cF$. By the $\wedge_k$-freeness of $\cF$ there are at most $k$ different sets among $D(G'_1),\ldots,D(G'_{k^2l})$, which means we have $kl$ different sets $G'_i$ (we call them $G''_1,\ldots,G''_{kl}$) with $D(G''_1)=\ldots=D(G''_{kl}) \in \cF$. But then by Observation \ref{kulonbozok}, we have that $F(G''_1),\ldots,F(G''_{kl}) \in \cF$ are all different and all of them contain $D(G''_1)$, a contradiction and we are done with the proof of Lemma \ref{newfree}.
\end{proof}

In the following lemma, we would like to bound the quantity $|(\cup_{F \in \cF_1} \cN(F)) \setminus \cF|$ from below.  Note that it is possible that $\cN(F) \cap \cN(F') \neq \emptyset$ for two distinct sets $F,F' \in \cF$. However, we prove that it can not happen too many times in some ``average sense". Let us define an auxiliary bipartite graph $G_1$, where the two parts are $\cF_1$ and $(\cup_{F \in \cF_1} \cN(F)) \setminus \cF$, and the sets $F, H$ with $F\in \cF_1$, $H \in (\cup_{F \in \cF_1} \cN(F)) \setminus \cF$, are connected by an edge if $H \in \cN(F)$.

\begin{lemma}\label{morenew}

$$|\cF_1|\le |(\cup_{F \in \cF_1} \cN(F)) \setminus \cF|.$$

\end{lemma}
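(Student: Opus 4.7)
My plan is to establish Hall's matching condition for the bipartite graph $G_1$, which via a fractional matching saturating $\cF_1$ immediately gives $|\cF_1| \le |\bigcup_{F \in \cF_1}\cN(F) \setminus \cF|$.

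First I would verify that $\cN(F) \setminus \cF \neq \emptyset$ for every $F \in \cF_1$. Write $a = d^+(F)$ and $b = d^-(F)$; the $P_4$-freeness of $\cF$ forces each $D_j$ to have no strict subset in $\cF$ and each $U_i$ to have no strict superset in $\cF$. If $\cN(F) \subseteq \cF$, then for every $U_i$ the $2a+1$ distinct sets $F, D_1, \ldots, D_a, F_{i,1}, \ldots, F_{i,a}$ all lie strictly below $U_i$ in $\cF$; $\wedge_k$-freeness then gives $a \le (k-2)/2$, and symmetrically $b \le (l-2)/2$. Substituting these bounds into the defining inequality $(l-1)a + (k-1)b > (k-1)(l-1)$ of $\cF_1$ produces a contradiction. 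A refinement of the same count yields the quantitative estimate $|\cN(F) \setminus \cF| \ge ab - \min(b(k-2-a), a(l-2-b))$.

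Next, for each $H \in \bigcup\cN(F) \setminus \cF$, I would bound $\ell(H) := |\{F \in \cF_1 : H \in \cN(F)\}|$. Every such $F$ is uniquely determined by the pair $(D, U) = (H \cap F, H \cup F)$, where $D \subsetneq H \subsetneq U$, $D$ has no strict subset in $\cF$, and $U$ has no strict superset in $\cF$, via the identity $F = (U \setminus H) \cup D$. Let $d'(H)$ and $u'(H)$ count the number of such $D$ and $U$ respectively. Running the same degree argument around $H$, each $U \supsetneq H$ lies above the $d'(H)$ sets $D$ plus the middles $F_{D,U} = (U \setminus H) \cup D \in \cF$, giving
\[
\ell(H) \le \min\bigl(u'(H)\,d'(H),\; u'(H)(k-1-d'(H)),\; d'(H)(l-1-u'(H))\bigr).
\]

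Finally, combining the estimates above via the edge-count identity $\sum_{F \in \cF_1}|\cN(F) \setminus \cF| = |E(G_1)| = \sum_H \ell(H)$ and a fractional matching in $G_1$ (assigning weight $1/|\cN(F) \setminus \cF|$ to each edge, so that the total weight leaving each $F$ is $1$ and, on the edgewise assumption $|\cN(F) \setminus \cF| \ge \ell(H)$, the total weight arriving at each $H$ is at most $1$), I would conclude $|\cF_1| \le |\bigcup\cN(F) \setminus \cF|$. The main obstacle is thus the edgewise inequality $|\cN(F) \setminus \cF| \ge \ell(H)$: when either $a = k-1$ or $b = l-1$ at $F$, or $u'(H), d'(H)$ are at their extremes at $H$, the two bounds combine cleanly; the remaining balanced regime requires a finer extremal analysis of the 0-1 incidence matrix $(x_{ij}) = (\mathbf{1}[F_{ij} \in \cF])$ at $F$ and the dual matrix around $H$, exploiting the $P_4$-free tripartite structure of $\cF$ to eliminate the configurations violating the edgewise inequality.
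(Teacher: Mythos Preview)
Your framework matches the paper's: both set up the bipartite graph $G_1$ between $\cF_1$ and $(\bigcup_{F\in\cF_1}\cN(F))\setminus\cF$ and aim to verify a Hall-type condition. Your verification that every $F\in\cF_1$ has a neighbour (Condition~1) is correct and essentially the paper's argument.

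The genuine gap is the \emph{edgewise} inequality $|\cN(F)\setminus\cF|\ge \ell(H)$ for every edge $(F,H)$. You explicitly leave the ``balanced regime'' to an unspecified ``finer extremal analysis,'' so the proof is incomplete. Worse, this edgewise inequality is strictly stronger than what is needed and it is not clear it is even true. For instance, with $k=l=10$, $d^+(F)=d^-(F)=5$, one can arrange the incidence matrix $(x_{ij})$ so that $a_1=b_1=0$ while $X=\sum a_i=12$; then $d(F)=ab-X=13$, whereas the natural upper bound on $\ell(H)$ for $H=(U_1\setminus F)\cup D_1$ is $(k-1-a-a_1)(l-1-b-b_1)=16$. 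Nothing in your outline rules out $\ell(H)=14$ here, and your proposed bounds $\ell(H)\le u'(H)(k-1-d'(H))$ etc.\ do not help without knowing how $u'(H),d'(H)$ relate to $a,b,X$.

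The paper sidesteps this entirely by proving only the \emph{averaged} condition: for every $F\in\cF_1$,
\[
d_{G_1}(F)^2 \;\ge\; \sum_{H\in N_{G_1}(F)} d_{G_1}(H),
\]
which via Cauchy--Schwarz yields $\sum_{H\in N(F)} 1/d_{G_1}(H)\ge 1$ and hence Hall's condition. Concretely, writing $a=d^+(F)$, $b=d^-(F)$, $a_i,b_j,X$ as you do, the paper bounds $d_{G_1}(H_{ij})\le (k-1-a-a_i)(l-1-b-b_j)$ (here $P_4$-freeness is used to ensure the $D_j$'s are pairwise incomparable), sums over $i,j$, and reduces the resulting inequality to
\[
\left(\tfrac{k-1}{a}-2\right)^2+\left(\tfrac{l-1}{b}-2\right)^2\ge 0
\]
after substituting the constraint $2X\le b(k-1-a)+a(l-1-b)$. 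This averaged route is both weaker as a hypothesis and admits a clean closed-form proof; your edgewise route, even if salvageable, would require substantially more work than you have indicated.
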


\begin{proof}

We use the following simple claim which can be described as an ``average version of Hall's theorem". We include its proof below for completeness. For a vertex $x$ in a graph $G$, let $N(x)$ denote the neighborhood of $x$ in $G$. For a set $S$ of vertices of $G$, let $N(S) = \cup_{x \in S} N(x)$.

\begin{claim}\label{averagehall}
Let $G=(A,B,E)$ be a bipartite graph such that: 

\smallskip

1. there is no isolated vertex in $B$, and 

\smallskip

2. the degree of every vertex $x\in B$ is at least the average of the degrees in $N(x)$, i.e. $$d(x)\ge \frac{\sum_{y\in N(x)}d(y)}{d(x)}.$$ 

\smallskip 

Then there exists a matching in $G$ that covers $B$, and in particular $|B|\le |A|$ holds.
\end{claim}

\begin{proof}
\[
\frac{d(x)}{\sum_{y\in N(x)}\frac{1}{d(y)}}\le \frac{\sum_{y\in N(x)}d(y)}{d(x)}\le d(x),
\]
and thus $$\sum_{y\in N(x)}\frac{1}{d(y)}\ge 1.$$
Now for any $B'\subseteq B$ we sum $\frac{1}{d(y)}$ over all edges $(xy)$ with $x\in B', y\in N(B')$, and we obtain
\[
|B'|=\sum_{x\in B'}1\le \sum_{x\in B'}\sum_{y\in N(x)}\frac{1}{d(y)}\le \sum_{y\in N(B')}\sum_{x\in N(y)}\frac{1}{d(y)}=\sum_{y\in N(B')}1=|N(B')|.
\]
As $G$ satisfies Hall's condition, $G$ indeed contains a matching that covers $B$.\end{proof}

The following claims show that the conditions of Claim \ref{averagehall} are satisfied for $G_1$.

\begin{claim} 
Condition 1. of Claim \ref{averagehall} holds for $G_1$ with $A:=(\cup_{F \in \cF_1} \cN(F)) \setminus \cF$ and $B:=\cF_1$.
\end{claim}

\begin{proof} Pick $F\in\cF_1 = B$. We want to show that $F$ is adjacent to some set in $A:=(\cup_{F \in \cF_1} \cN(F)) \setminus \cF$. By the definition of $\cF_1$, we know that $d^+(F), d^-(F) \ge 1$, and we cannot have both $d^+(F) < (k-1)/2$ and $d^-(F) < (l-1)/2$. Without loss of generality, we can assume that $d^+(F) \ge (k-1)/2$ and let $D_1,...,D_{d^{+}(F)} \in \cF$ be the sets contained in $F$. Then take a set $U_1 \in \cF$ containing $F$, and consider the sets $\{(U_1\setminus F)\cup D_j :  1 \le j \le d^{+}(F)\}$. Suppose they are all in $\cF$. Then since they are contained in $U_1$ and are different from the sets $D_1,\ldots,D_{d^{+}(F)}, F$, we get that $U_1$ contains at least $2d^{+}(F)+1 \ge k$ sets from $\cF$, contradicting the $\Lambda_k$-free property of $\cF$. Thus one of the sets $S \in \{(U_1\setminus F)\cup D_j :  1 \le j \le d^{+}(F)\}$ is not in $\cF$, so $S \in (\cup_{F \in \cF_1} \cN(F)) \setminus \cF = A$ and $S$ is adjacent to $F$ in the graph $G_1$, as desired.
\end{proof}

Now we prove Condition 2. of Claim \ref{averagehall} and we note that we will use the $P_4$-freeness of $\cF$ only during the proof of the following claim.

\begin{claim}\label{hallforus}

Condition 2. of Claim \ref{averagehall} holds for $G_1$ with $A:=(\cup_{F \in \cF_1} \cN(F)) \setminus \cF$ and $B:=\cF_1$.

\end{claim}

\begin{proof}

Pick any $F\in \cF_1$ and let $$a_i:=|\{j: (U_i \setminus F) \cup D_j \in \cF\}|, \textrm{ and}$$ $$b_j:=|\{i: (U_i \setminus F) \cup D_j \in \cF\}|,$$

where $1 \le i \le d^-(F)$ and $1 \le j \le d^+(F)$. We know that $\sum_{i=1}^{d^-(F)} a_i = \sum_{j=1}^{d^+(F)} b_j$, and let us denote this quantity by $X=X_F$. Observe that the degree of $F \in \cF_1$ in the auxiliary bipartite graph $G_1$ is $d^{-}(F)d^{+}(F) - X$.

\

Pick the set $(U_i \setminus F) \cup D_j$ for some $1 \le i \le d^-(F)$ and $1 \le j \le d^{+}(F)$, and let us examine how many sets of $\cF$ can be contained in this set. Note that by Observation \ref{kulonbozok}  the degree of $(U_i \setminus F)\cup D_j$ is at most $|\{ S \in \cF : S \subset (U_i \setminus F) \cup D_j \}|\cdot |\{ S \in \cF : S \supset (U_i \setminus F) \cup D_j \}|$.

Observe that as $(U_i \setminus F) \cup D_j \subset U_i$ we have $$|\{ S \in \cF : S \subset (U_i \setminus F) \cup D_j \}| \le k-1-(d^+(F)+1)-a_i+1=k-1-d^+(F)-a_i.$$

Indeed, the sets $F, D_1,\ldots,D_{d^{+}(F)}$ are different from the $a_i$ sets of $\cF$ of the form $(U_i \setminus F) \cup D_{j'}$, and they are all contained in $U_i$. We also claim that these sets (apart from $D_j$) are not contained in $(U_1\setminus F)\cup F_j$. This is because $\cF$ is $P_4$-free, so in particular the $D_{j'}$'s form an antichain. 
Since at most $k-1$ sets can be contained in $U_i$, it follows that besides $D_j$, at most $(k-1)-(d^{+}(F)+1)-a_i$ other sets of $\cF$ can be contained in  $(U_i \setminus F) \cup D_j$.

Similarly, we have 
$$|\{ S \in \cF : (U_i \setminus F) \cup D_j \subset S \}| \le l-1-(d^-(F)+1)-b_j+1=l-1-d^-(F)-b_j.$$

It suffices to prove the following:

\begin{equation}\label{elso}
\sum_{1 \le i \le d^-(F), \ 1 \le j \le d^{+}(F)}(k-1-d^+(F)-a_i)(l-1-d^-(F)-b_j) \le (d^{-}(F)d^{+}(F) - X)^2.
\end{equation}
Note that the left hand side of \eqref{elso} is 
$$\left(\sum_{1 \le i \le d^-(F)}(k-1-d^+(F)-a_i)\right)\left(\sum_{1 \le j \le d^{+}(F)}(l-1-d^-(F)-b_j)\right)=$$
$$=\left(d^-(F)(k-1-d^+(F))-X\right)\left(d^+(F)(l-1-d^-(F))-X\right),$$
so the desired inequality has the following form: 
\begin{equation}\label{masodik}
(d^+(F)(l-1-d^-(F))-X)(d^-(F)(k-1-d^+(F))-X) \le (d^{-}(F)d^{+}(F) - X)^2.
\end{equation}
After rearranging, \eqref{masodik} is equivalent to the following:

$$
X(4d^{+}(F)d^{-}(F)-(d^{-}(F)(k-1)+d^{+}(F)(l-1)))$$
\begin{equation}\label{harmadik}
\le d^{-}(F)d^{+}(F)(d^{-}(F)(k-1)+d^{+}(F)(l-1)-(l-1)(k-1)).
\end{equation}

Now we use the following inequalities, that are consequences of using the $\wedge_k$-freeness condition on $D_j$'s and $\vee_l$-freeness on $U_i$'s:

\bigskip  

$\bullet_1$ $X \le d^{-}(F)(k-1-d^{+}(F)),$ and $X \le d^{+}(F)(l-1-d^{-}(F)).$

\medskip 

$\bullet_2$ As a consequence, we have $X\le \frac{d^{-}(F)(k-1-d^{+}(F))+d^{+}(F)(l-1-d^{-}(F))}{2}$.

\bigskip  

Plugging $\bullet_2$ into the left hand side of \eqref{harmadik} it would be enough to prove:

$$\frac{d^{-}(F)(k-1-d^{+}(F))+d^{+}(F)(l-1-d^{-}(F))}{2}(4d^{+}(F)d^{-}(F)-(d^{-}(F)(k-1)+d^{+}(F)(l-1)))$$
\begin{equation}\label{negyedik}
\le d^{-}(F)d^{+}(F)(d^{-}(F)(k-1)+d^{+}(F)(l-1)-(l-1)(k-1)).
\end{equation}

\ 

\noindent 
If one multiplies \eqref{negyedik} by $\frac{2}{(d^{-}(F)d^{+}(F))^2}$ and uses the notation $\alpha:=\frac{k-1}{d^{+}(F)}$ and $\beta:=\frac{l-1}{d^{-}(F)}$, then \eqref{negyedik} becomes
$$(\alpha + \beta -2)(4- \alpha - \beta) \le 2 (\alpha + \beta -\alpha\beta),$$ which is equivalent to
$$0 \le (\alpha -2)^2 + (\beta -2)^2,$$
and thus we are done with the proof of Claim \ref{hallforus}.
\end{proof}

\noindent 
This finishes the proof of Lemma \ref{morenew}.
\end{proof}

\noindent 
Now observe that by Lemma \ref{newfree} and Theorem \ref{Katona_Tarjan}, we have 

\begin{equation}
\label{10}
|\cF \cup (\cup_{F \in \cF_1} \cN(F))| \le (1 + o(1))\binom{n}{\floor{n/2}}.
\end{equation}
\noindent 
Moreover, by Lemma \ref{morenew}, we have
\begin{equation}
\label{11}
|\cF \setminus \cF_1| + 2|\cF_1| = |\cF| + |\cF_1| \le |\cF|+ |(\cup_{F \in \cF_1} \cN(F)) \setminus \cF| = |\cF \cup (\cup_{F \in \cF_1} \cN(F))|.
\end{equation}
\noindent 
Combining \eqref{10} and \eqref{11}, we get 
$$ |\cF \setminus \cF_1| + 2|\cF_1| \le (1 + o(1))\binom{n}{\floor{n/2}}. $$
\noindent 
Using this, we obtain the following upper bound on the number of containments in $\cF$:

$$c(P_2, \cF)  (k+l-2)= \sum_{F \in \cF} ((l-1)d^{+}(F)+(k-1)d^-(F)) \le$$ $$ \le (k-1)(l-1)|\cF \setminus \cF_1| + 2(k-1)(l-1)|\cF_1| \le ((k-1)(l-1)+o(1))\binom{n}{n/2}.$$

This completes the proof of Theorem \ref{p4free}.
\qed



\end{document}